\numberwithin{equation}{section}
\newtheorem{theorem}{Theorem}
\newtheorem{lemma}[theorem]{Lemma}
\newtheorem{corollary}[theorem]{Corollary}
\newtheorem{proposition}[theorem]{Proposition}
\theoremstyle{remark}
\newtheorem{remark}{Remark}
\newcommand{\FD}{\text{\rm FD}}
\newcommand{\LDS}{\text{\rm LDS}}
\newcommand{\bpm}{\begin{pmatrix}}
\newcommand{\epm}{\end{pmatrix}}
\newcommand{\Z}{\mathbb{Z}}
\newcommand{\N}{\mathbb{N}}
\newcommand{\SL}{{\text {\rm SL}}}
\newcommand{\sgn}{\operatorname{sgn}}
\newcommand{\g}{\gamma}
\renewcommand{\H}{\mathbb{H}}
\newcommand{\R}{\mathbb{R}}
\renewcommand{\d}{\partial}
\newcommand{\C}{\mathbb{C}}
\newcommand{\e}{\varepsilon}
\newcommand{\DS}{\text{\rm DS}}
\newcommand{\Cal}[1]{\mathcal{#1}}
\renewcommand{\a}{\alpha}
\newcommand{\ord}{\text{\rm ord}}
\renewcommand{\g}{\mathfrak g}
\renewcommand{\P}{\Phi}
\newcommand{\kay}{\bm{k}}   
\newcommand{\lra}{\longrightarrow}
\newcommand{\uf}{f}
\newcommand{\tf}{\tilde{f}}
\newcommand{\ue}{e}
\newcommand{\te}{\tilde{e}}
\newcommand{\nass}{\noalign{\smallskip}}
\newcommand{\now}{\count0=\time
\divide\count0 by 60
\count1=\count0
\multiply\count1 by 60
\count2= \time
\advance\count2 by -\count1
\the\count0:\the\count2}
\begin{document}

\title{A classification of harmonic Maass forms}

\author{Kathrin Bringmann}
\address{Mathematical Institute\\University of Cologne\\ Weyertal 86-90 \\ 50931 Cologne \\Germany}
\email{kbringmann@math.uni-koeln.de}

\author{Stephen Kudla}
\address{Department of Mathematics\\University of Toronto\\40 St. George St. BA6290\\Toronto, Ontario,  M5S 2E4\\Canada}
\email{skudla@math.toronto.edu}

\begin{abstract}
We give a classification of the Harish-Chandra modules generated by the pullback to 
$\SL_2(\R)$ of harmonic Maass forms for congruence subgroups of $\SL_2(\Z)$ with exponential growth allowed at the cusps. 
We assume that the weight is integral but include vector-valued forms. Due to the weak growth condition, these 
modules do not need to be irreducible. Elementary Lie algebra considerations imply that there are 9 possibilities, and we show, by
giving explicit examples, that all of them arise from harmonic Maass forms. Finally, we briefly discuss the case of 
forms that are not harmonic but rather are annihilated by a power of the Laplacian, where much more complicated 
Harish-Chandra modules can arise. We hope that our classification will prove useful in understanding harmonic Maass forms from 
a representation theoretic perspective and that it will illustrate in the 
simplest case the phenomenon of extensions occurring in the space of automorphic forms.
\end{abstract}


\maketitle

\centerline{\bf }

{

\section{Introduction and statement of results}
The standard definition of a weight $k$ modular form $f$, say for $\SL_2(\Z)$ or a subgroup $\Gamma$ of finite index in $\SL_2(\Z)$, requires that $f$
is holomorphic at the cusps. If this condition is relaxed to allow poles at the cusps, the resulting {\it weakly holomorphic} modular forms
can have negative weight and the associated $\Gamma$-invariant function $\tf$ on $\Gamma\backslash G$, $G:=\SL_2(\R)$, does not need to be square-integrable.
By further relaxing the holomorphicity condition by allowing functions which are annihilated by the Laplace operator, one arrives at harmonic Maass forms -- see Section~\ref{section2} for the precise definition.
Such functions have
proved to be of considerable interest and importance, as they arise, for example, in the study of mock modular forms, in the constructions of Borcherds forms, in
Bruinier's construction of Green functions for divisors on orthogonal Shimura varieties, as incoherent Eisenstein series.
In a number of cases, vector-valued forms are involved, and so we include such forms in our discussion.

Suppose that $f$ is a harmonic Maass form of integral weight $k$ and let $\tf$ be the corresponding
function on $G$, cf. (\ref{lifttoG}) below. In the scalar-valued case, it is left invariant under $\Gamma$, and, in general, it satisfies the equivariance property
(\ref{gamma.inv}) below. The group $G$, and hence its
complexified Lie algebra,  acts on such functions by right translations. Let $M(\tf)$ be the $(\g,K)$-module generated by $\tf$, where $\g := \text{\rm Lie}(G)_\C$ is the complexified Lie algebra
of $G$ and $K:=\text{\rm SO}(2)$ is the standard maximal compact subgroup.

 Our first result,  Theorem~\ref{abstractMf}, is a classification of the possible $(\g,K)$-modules that could arise as $M(\tf)$'s, based on
considerations of the structure of a cyclic $(\g,K)$-module generated by a ``harmonic'' vector.
The resulting indecomposable modules are built up from
the various irreducible $(\g,K)$-modules occurring as constituents of the principal series for $G$ at points of reduciblity. It turns out that there
are $9$ possibilities. Most are subquotients of the reducible principal series but several are not, cf. Corollary~\ref{sub-quotes}.
The proof of this result is an easy exercise and the list of possibilities has a simple reformulation in terms of the behavior of $f$
under the classical raising and lowering operators, cf. Remark~\ref{remark4}.
Note that some of the possible $(\g,K)$-modules associated to harmonic Maass forms were described
in the earlier work of Schulze-Pillot (Proposition 3 of \cite{schulze-pillot}) where he assumed\footnote{In the case $k=1$,
the principal series,  $V(1)$ in his notation, is a direct sum $\LDS^+(0)\oplus \LDS^-(0)$, in our notation. So
his indecomposable in this case should be taken to mean the
indecomposable {II}(b) in our Theorem~\ref{abstractMf} rather than $V(1)$.} that $f$ has weight $k \le 1$.

Our second and main result shows that every possibility listed in Theorem~\ref{abstractMf} actually arises as an $M(\tf)$ for
some harmonic Maass form $f$.  We prove this in Section~\ref{section.examples}, by giving explicit examples for each case.
Here it should be noted that it is essential to consider vector-valued forms,  since it is shown that certain cases cannot occur non-trivially for scalar-valued forms. In fact, in realizing our list, we only use the symmetric tensor representations $(\rho_m,\mathcal P_m)$ of $G$
on the space $\mathcal P_m$ of polynomials
of degree at most $m\in\N_0$, cf. (\ref{poly-rep}) below.   For example, the $\mathcal P_m$-valued
function defined via
$$e_{r,m-r}(\tau)(X) := \frac{(-1)^{m-r}}{r!}\,v^{r-m} \, (X-\tau)^r(X-\overline{\tau})^{m-r},\qquad 0\le r\le m,$$
for $\tau=u+iv\in \H$, satisfies
$$e_{r,m-r}(\gamma\tau) = (c\tau+d)^{m-2r}\,\rho_m(\gamma)\,e_{r,m-r}(\tau),$$
for all  $\gamma=\left(\begin{smallmatrix}
a&b\\ c&d
\end{smallmatrix}\right)\in G$, and thus
the holomorphic function $e_{m,0}$ is a harmonic Maass form of weight $-m$ and type $\rho_m$ for any $\Gamma$.
The space
$$M(\widetilde{e_{m,0}}) = \text{\rm span}\left\{\widetilde{e_{m,0}}, \widetilde{e_{m-1,1}} \dots, \widetilde{e_{1,m-1}}, \widetilde{e_{0,m}}  \right\},$$
realizes the finite dimensional $(\g,K)$-module of dimension $m+1$, case {I}(a) on our list.  As already observed in \cite{schulze-pillot}, the only finite dimensional representation which
can occur for scalar-valued forms is the trivial representation.

A related case involves a generalization of the (non-holomophic) weight $2$ Eisenstein series $E_2^*$ defined in \eqref{E2star}.
This case is treated in \cite{PSS}, however, for the reader's convenience, we give all details. The $(\g,K)$-module associated to
$E_2^*$ is a non-split extension with the weight $2$ holomorphic discrete series representation as quotient and the trivial representation as submodule.
For any $m\in\N_0$, there is a $\mathcal P_m$-valued function
$$E^*_{m+2}(\tau):= \sum_{r=0}^m \frac{1}{r+1} {m\choose r}\,\ue_{r,m-r}(\tau)\,R^r\,E^*_2(\tau),$$
where $R^r=R^r_k:=R_{k+2(r-1)}\circ\ldots\circ R_{k+2}\circ R_k$ is the $r$-fold application of the Maass raising operator
\begin{equation}\label{raising}
	R=R_k:= 2i\frac{\partial}{\partial \tau} +\frac{k}{v}.
\end{equation}
It satisfies
$$
L_{m+2}\,E^*_{m+2} = \frac{3}{\pi} \,\ue_{0,m}
$$
with the Maass lowering operator
\begin{equation}\label{lowering}
	L=L_k:=-2iv^2\frac{\partial}{\partial\overline{\tau}}.
\end{equation}
Since $e_{0,m}$ is annihilated by $R_{m}$ and the Laplace operator $\Delta_k$ (defined in \eqref{Laplace} below) satisfies $\Delta_k = -R_m\circ L_{m+2}$, $E^*_{m+2}$
 is a harmonic Maass form of weight $k=m+2$. The associated
$(\g,K)$-module is a non-split extension with the weight $k$ holomorphic discrete series representation as quotient and the finite-dimensional
space $M(\widetilde{e_{m,0}})$ as submodule; this is case {III}(b) in our list.  Note that the functions $R^r\,E^*_2$ occurring in the components of $E^*_{m+2}$
lie in the spaces of nearly holomorphic modular forms in Theorem~4.2 of \cite{PSS}.

Case {II}(b) in our list is a non-split extension with the holomorphic weight $1$ limit of discrete series representation as quotient and
anti-holomorphic weight $1$ limit of discrete series representation as submodule, cf. (\ref{tiny.ext}) below. This is precisely the $(\g,K)$-module
associated to the central derivative of the incoherent Eisenstein series of weight $1$ introduced in \cite{kry.tiny}.
This weight $1$ harmonic Maass form can be viewed as the ``modular completion'' of the generating series for arithmetic degrees of special $0$-cycles
on the moduli space of CM elliptic curves, loc.\!\,cit., and hence is typical of a class of such forms whose holomorphic part is related to arithmetic.
Other such examples occur, sometimes only conjecturally, for example
in  \cite{kry.faltings,kry.book}, and in the work of Duke-Li \cite{duke.li}.  Analogous phenomena have emerged the in the theory of $p$-adic modular
forms \cite{darmon.lauder.rotger}.

\begin{remark}
In this paper, we do not include of half-integral weight forms for which a similar classification could be made, cf. \cite{schulze-pillot} for
a discussion.
\end{remark}

Finally, we note that the harmonicity condition $\Delta_kf=0$ is essential to our classification, since it implies that the
$K$-types occur in $M(\tf)$ with multiplicity $1$.  In Section~\ref{section.related}, we show that more complicated $(\g,K)$-modules
can show up if one only requires that $\Delta_k^\ell f=0$ for some $\ell\in\N_0$.  A simple, but arithmetically interesting example is given by the
weight $0$ non-holomorphic modular form
\begin{equation}\label{Kronecker-fun}
\phi(\tau) :=  -\frac16\,\log\big(|\Delta(\tau)|^2\,v^{12}\big),
\end{equation}
which arises in the Kronecker limit formula, where $\Delta$ is the weight $12$ modular discriminant.
This form is annihilated by $\Delta_0^2$ and the associated $(\g,K)$-module has the following picture:
$$
\begin{matrix}
\text{\small\bf  Figure 1.  The $(\g,K)$-module for the Kronecker limit formula\qquad\quad}\\
\xymatrix{
{}&{}&{}&{}&{\bullet}\ar[dl]_{L_0}\ar[dr]^{R_0}&{}&{}&{}&{}&{}\\
\dots\ar@/^/[r]^{R_{-8}}&{\circ} \ar@/^/[l]^{L_{-6}}\ar@/^/[r]^{R_{-6}}&{\circ}\ar@/^/[l]^{L_{-4}}
\ar@/^/[r]^{R_{-4}}&{\circ}\ar[dr]_{R_{-2}}\ar@/^/[l]^{L_{-2}}&
{}&{\circ}\ar[dl]^{L_2}\ar@/^/[r]^{R_2}&{\circ}\ar@/^/[r]^{R_4}\ar@/^/[l]^{L_{4}}&\ar@/^/[l]^{L_{6}}{\circ}\ar@/^/[r]^{R_6}&\dots \ar@/^/[l]^{L_{8}}\\
{}&{}&{}&{}&{\odot}&{}&{}&{}&{}&
}
\end{matrix}
$$
An explanation is given in Section~\ref{section.related}.

In summary, we hope that our classification will prove useful in understanding harmonic Maass forms from a representation theoretic
perspective and that it will provide an elementary motivating example\footnote{In fact, this development is already underway, cf. Remark~\ref{rem10} in section ~\ref{section.related}  for a brief discussion.} for the study of extensions occurring in the space of automorphic forms, including those
with weaker than traditional growth conditions, and their analogues for more general groups. In addition, we hope that it will serve as an
accessible introduction to this point of view.

{\bf Thanks:}  This project was begun during a visit by the second author to the University of Cologne in of 2014 and completed during visits to
Oberwolfach,  TU Darmstadt, and ETH, Z\"urich, in the summer of 2016. He would like to thank these institutions for their support and stimulating working environments. The research of the first author is supported by the Alfried Krupp
Prize for Young University Teachers of
the Krupp foundation and the research leading to these results
receives funding from the European Research
Council under the European Unions Seventh Framework Programme
(FP/2007-2013) / ERC Grant agreement n.
335220 - AQSER.
The authors thank Dan Bump, Stephan Ehlen, Olav Richter, Rainer Schulze-Pillot, and Martin Westerholt-Raum for useful comments on an earlier version of this paper.

\section{Harmonic Maass form}\label{section2}

Let $(\rho,V)$ be a finite dimensional complex representation of  a subgroup $\Gamma$ of finite index in $\SL_2(\Z)$, and,
for simplicity, suppose that $k$ is an integer.
By a {\it harmonic Maass form of weight} $k$ and {\it type} $(\rho,V)$, we mean a smooth function
$\uf :\H \rightarrow  V$  satisfying the following conditions,
\cite{bruinier.funke}:
\vskip -15pt
\begin{enumerate}
	\item
	For $\gamma=\left(\begin{smallmatrix}a&b\\c&d\end{smallmatrix}\right)\in\Gamma$
	$$
	\uf (\gamma\tau) = j(\gamma,\tau)^k \,\rho(\gamma)\, \uf (\tau),
	$$
	where $j(\gamma,\tau):=c\tau+d$.
	
	\item We have
	$$
	\Delta_k \uf  = 0,
	$$
	with the {\it hyperbolic Laplacian} in weight $k$
	\begin{equation}\label{Laplace}
	\Delta_k:= -v^2\,\left(\frac{\partial^2}{\partial u^2} + \frac{\partial^2}{\partial v^2}\right) + i k\,v \left(\frac{\partial}{\partial u} + i \frac{\partial}{\partial v}\right).
	\end{equation}
	\item
	There exists a constant $B>0$ such that 
	$$\uf(\tau) = O\left(e^{B v}\right) \qquad \text{as $v\rightarrow \infty$, uniformly in $u$}.$$
	A similar condition holds at all cusps of $\Gamma$.	
\end{enumerate}
We denote this space by $H_k^{\text{mg}}(\Gamma,\rho)$.  If the representation $(\rho,V)$ is one-dimensional, i.e. is given by a character $\chi:\Gamma\rightarrow \C^\times$,
we abbreviate this to
$H_k^{\text{mg}}(\Gamma,\chi)$ or $H_k^{\text{mg}}(\Gamma)$, if $\chi$ is trivial or if we do not want to be explicit about $\chi$.
We refer to functions in this space as {\it scalar-valued Maass forms}.

A special subspace of $H_k^{\text{mg}}(\Gamma,\rho)$ consists of those forms, for which there exists a polynomial $P_f(\tau)\in V[q^{-1}]$
such that
\[
f(\tau)-P_f(\tau)=O\left(e^{-\varepsilon v}\right)
\]
as $v\to\infty$ for some $\varepsilon>0$, and similarly at the other cusps. We denote this space by $H_k(\Gamma, \rho)$
and refer to $P_f(\tau)$ as the principal part of $f$ (at the given cusp).

\begin{remark}  Note that a holomorphic function satisfying these conditions can have poles at the cusps, i.e., is a weakly holomorphic modular form in the usual,
somewhat unfortunate, terminology. Harmonic Maass forms with exponential growth at the cusps as allowed by (3) are
sometimes referred to as ``harmonic weak Maass forms''.
\end{remark}

A scalar-valued Maass form $\uf$ of weight $k \in \Z\setminus\{1\}$ has a Fourier expansion of the form (see (3.2a) and (3.2b) of \cite{bruinier.funke})
\begin{equation}\label{weak-Fourier}
f(\tau)=f^+(\tau)+f^-(\tau)
\end{equation}
where the {\it holomorphic part of $f$} is given by
\begin{equation}\label{ppart}
f^+(\tau)=\sum_{n\in\frac{1}{N}\Z \atop n\gg-\infty} c_f^+(n)\,q^n,
\end{equation}
for some $N\in \N$,
whereas, for $k\ne1$,  its {\it non-holomorphic part} is given by
\begin{equation}\label{mpart1}
f^-(\tau)=c_f^- (0)\,v^{1-k}+\sum_{\substack{n\in\frac{1}{N}\Z\setminus\{0\} \\ n\ll \infty}} c_f^-(n)\,W_k( 4\pi n v)\,q^n.
\end{equation}
Here, for $x\in \R$, $W_k(x)$ is the real-valued  incomplete gamma function\footnote{We sometimes write $\beta_k(x)$ in place of $W_k(-x/2)$, 
as this notation occurs in many places in the literature.}, defined as in \cite{BDE}, Section 2.2, by
\begin{equation}\label{def-Wk}
W_k(x) = \operatorname{Re}\big(\,\Gamma(1-k,-2x)\,\big) =\Gamma(1-k,-2x) +  \begin{cases} \frac{(-1)^{1-k} \pi i}{(k-1)!}&\text{for $x>0$,}\\
0&\text{for $x<0$,}
\end{cases}
\end{equation}
with
$$\Gamma(s,x) := \int_{x}^\infty e^{-t}\,t^{s-1}\,dt. $$

For $k=1$, the non-holomorphic part $f^-(\tau)$ has the same shape but with the term $v^{1-k}$ in (\ref{mpart1}) replaced by $-\log(v)$.
Note that the constraints on $n$ in the sums, $n\gg -\infty$ in $f^+(\tau)$ and $n\ll \infty$ in $f^-(\tau)$,  are a consequence of the growth condition (3) and the
asymptotics of $\Gamma(s,x)$.

The subspace $H_k(\Gamma)$ may be characterized as those elements of $H_k^{\operatorname{mg}}(\Gamma)$ for which $c_f^-(n)=0$ for $n\ge  0$.
For $f\in H_k(\Gamma)$, we have
\begin{equation}\label{mpart2}
f^-(\tau)=\sum_{\substack{n\in\frac{1}{N}\Z \\ n<0}} c_f^-(n)\,\Gamma\left(1-k,4\pi |n| v\right)q^{n}.
\end{equation}

We define the ``flipped space''
\begin{equation}\label{sharp}
H_k^\sharp\left(\Gamma\right):=\left\{f\in H_k^{\operatorname{mg}}\left(\Gamma\right): c_f^+(n)=0\text{ for }n<0\right\}.
\end{equation}

\begin{remark}\label{Fourier-remark} 
The Fourier expansion in the case of vector-valued forms of type $(\rho,V)$ is more subtle.
Suppose that $\Gamma=\SL_2(\Z)$, so that we have
$$f(\tau+1) = \rho\left(\begin{pmatrix} 1&1\\0&1\end{pmatrix}\right)\,f(\tau).$$
If the space $V$ admits a basis of eigenvectors of $\rho(\left(\begin{smallmatrix} 1&1\\0&1\end{smallmatrix}\right))$, then corresponding components of $f$
have a Fourier expansion as in (\ref{weak-Fourier}). This is for example the case if $(\rho,V)$ is a Weil representation as in \cite{bruinier.funke}.
On the other hand, if $(\rho,V)$ is a symmetric tensor representation, say realized on a space of polynomials as in (\ref{poly-rep}),
then there is no such basis and the Fourier series of $f$ must be defined by procedure of \cite{kuga.shimura}, which we now describe.
Suppose that the representation $(\rho,V)$ is the restriction to $\SL_2(\Z)$ of a holomorphic representation of $\SL_2(\C)$.
Letting
$$f^*(\tau) :=  \rho\left(\begin{pmatrix} 1&\tau\\0&1\end{pmatrix}\right)^{-1}\,f(\tau),$$
we have
$$
f^*(\tau+1) = \rho\left(\begin{pmatrix} 1&\tau+1\\0&1\end{pmatrix}\right)^{-1}\,\rho\left(\begin{pmatrix} 1&1\\0&1\end{pmatrix}\right)\,f(\tau)=f^*(\tau).$$
Thus $f^*$ has a Fourier expansion. The twist from $f$ to $f^*$ alters the action of the Maass operators,
\eqref{raising} and \eqref{lowering} however, and this is responsible for the
occurrence of different phenomena for vector-valued forms in certain cases, specifically cases I(a) and III(b) of Theorem~\ref{abstractMf}.
\end{remark}

Harmonic Maass forms relate in many ways to classical (weakly holomorphic) modular forms. To state the first of these connections, define the Bruinier-Funke {\it $\xi$-operator}
\begin{equation*}
	\xi_k:= 2iv^k \overline{\frac{\partial}{\partial\overline{\tau}}}.
\end{equation*}
Note that
\begin{equation*}
	\xi_k f=v^{k-2}\overline{L_{k}f}=R_{-k}\left(v^k \overline{f}\right)
\end{equation*}
from which one can easily conclude that $\xi_k: H^{\operatorname{mg}}_k \left(\Gamma,\rho\right) \to M_{2-k}^!(\Gamma,\bar\rho)$, where
$\bar\rho$ is the representation of $\Gamma$ on $V$ defined by
$$\bar\rho(\gamma)v = \overline{\rho(\gamma)\bar v}.$$
Here we need to assume that $V$ is defined over $\R$, i.e., that complex conjugation $\bar{}:V\rightarrow V$ is defined.  Of course this is true for spaces of
complex-valued functions, e.g., polynomials or group algebras as in \cite{bruinier.funke}.
We also note that, if $\uf$ has weight $k$, then $v^k\,\overline{f(\tau)}$ has weight $-k$ and
\begin{equation*}
L_{-k}^a\left(v^k\,\overline{f(\tau)}\right) = v^{k+2a} \,\overline{R_k^af(\tau)}, \qquad R_{-k}^a\left(v^k\,\overline{f(\tau)}\right) = v^{k-2a}\overline{L_k^a f(\tau)} .
\end{equation*}

Now assume that $f$ is scalar-valued.  Writing the Fourier expansion of $f$ as in \eqref{weak-Fourier}, we have 
\begin{equation}\label{xiact}
	\xi_kf(\tau)
	=(1-k)\overline{c_f^-(0)}-(4\pi)^{1-k} \sum_{\substack{n\in\frac1N\Z\setminus\{0\}\\n\gg -\infty}}\overline{\frac{c_f^-(-n)}{n^{k-1}}}q^n.
\end{equation}
while, for $k=1$, we have
\begin{equation}\label{xiact}
	\xi_1f(\tau)
	=-\overline{c_f^-(0)}-\sum_{\substack{n\in\frac1N\Z\setminus\{0\}\\n\gg -\infty}}\overline{c_f^-(-n)}\,q^n.
\end{equation}
Using this operator, $H_k(\Gamma)$ may be characterized as those elements in $H_k^{\text{mg}}(\Gamma)$ which map to cusp forms under $\xi_k$.

A further operator which relates harmonic Maass forms to (weakly) holomorphic modular forms is given by iterated differentiation.  Suppose that 
$k\in-\N_0$ and let
$$
D^{1-k}:=\left(\frac{1}{2\pi i} \frac{\partial}{\partial \tau}\right)^{1-k}.
$$
Using Bol's identity, \cite{bol, bump.choie},
\begin{equation}\label{classic-bol}
D^{1-k}=(-4\pi)^{k-1}R_k^{1-k},
\end{equation}
one can show that $D^{1-k}:H_k^{\operatorname{mg}}\left(\Gamma,\rho\right)\to M_{2-k}^{!}\left(\Gamma,\rho\right)$.
For $f$ scalar-valued, the operator $D^{1-k}$ acts on \eqref{weak-Fourier} as
\begin{equation}\label{D-action}
D^{1-k}f(\tau)=(1-k)!(4\pi)^{k-1}c_f^-(0)+\sum_{\substack{n\in\frac1N\Z\setminus\{0\}\\n\gg -\infty}}\frac{c_f^+(n)}{n^{k-1}}q^n.
\end{equation}
Using this operator, the space \eqref{sharp} may be characterized as
$$
H^\sharp_k\left(\Gamma\right)=\left\{f\in H_k^{\operatorname{mg}}\left(\Gamma\right):D^{1-k}(f)\in S_{2-k}\left(\Gamma\right)\right\}.
$$

Finally, we also require an operator, which ``flips'' the two spaces $H_k(\Gamma)$ and $H_k^\sharp(\Gamma)$. To be more precise, define for $f\in H_k^{\operatorname{mg}}(\Gamma)$ the {\it flip of $f$}
\begin{equation*}
	\mathfrak{F}_k:=\frac{v^{-k}}{(-k)!}\overline{R_k^{-k}}.
\end{equation*}
The flipping operator $\mathfrak{F}_k$ satisfies
\begin{align}\label{involution}
	\mathfrak{F}_k:&H_k\left(\Gamma\right)\to H_k^\sharp\left(\Gamma\right),~ H_k^\sharp\left(\Gamma\right)\to H_k\left(\Gamma\right),\notag\\
	&\qquad\mathfrak{F}_k\circ\mathfrak{F}_k(f)=f.
\end{align}
Moreover we have
\begin{align}
	\xi_k \circ\mathfrak{F}_k
	&=-\frac{(-4\pi)^{1-k}}{(-k)!}D^{1-k}, \label{xiflip}\\
	D^{1-k}\circ \mathfrak{F}_k
	&=\frac{(-k)!}{(4\pi)^{1-k}}\xi_k.\label{Dflip}
\end{align}

Natural harmonic Maass forms can be given via Poincar\'e series. For simplicity, we restrict to scalar-valued forms. We now describe the general construction. Let
\begin{equation*}
	\mathbb{P}_k(\varphi;\tau) := \sum_{\gamma =\left(\begin{smallmatrix} a & b \\ c & d \end{smallmatrix}\right)\in\Gamma_\infty\setminus\SL_2(\Z)}\varphi \Big|_k\gamma(\tau),
\end{equation*}
where $\Gamma_\infty:=\{\pm\left(\begin{smallmatrix}1&n\\0&1\end{smallmatrix}\right):n\in\Z\}$, $\big|_k$ is the usual weight $k$ slash operator and $\varphi : \H \rightarrow \C$ satisfies
\begin{equation*}
\varphi(\tau) = O(v^{2-k+\e})\qquad (\e>0).
\end{equation*}
For $k<0$, let
\begin{equation*}
	F_{k,m}(\tau) := \mathbb{P}_k\left(\varphi_{k,m}\right),
\end{equation*}
with $(e(u):=e^{2\pi iu})$
\begin{equation*}
	\varphi_{k,m}(\tau) := \frac{(-\sgn(m))^{1-k}}{(1-k)!}\left(4\pi |m|v\right)^{-\frac{k}{2}}M_{\sgn(m)\frac{k}{2},\frac{1-k}{2}}\left(4\pi|m|v\right)e(mu)
\end{equation*}
with $M_{\mu,\nu}$ the $M$-Whittaker function. These functions give rise to harmonic Maass forms (see e.g. \cite{Br, Ni}).
To be more precise, for $m>0$, we have
\[
	F_{k,m} \in H_k,\quad
	F_{k,-m} \in H^{\sharp}_k.
\]

\section{The $(\g,K)$-module defined by $f$.}
As usual, for $g\in G = \SL_2(\R)$ and $f\in H_k^{\text{mg}}(\Gamma,\rho)$, we let
\begin{equation}\label{lifttoG}
\tf(g) := j(g,i)^{-k}\, \uf (g(i)),
\end{equation}
so that $\tf:G \rightarrow V$ satisfies 
\begin{align}\label{gamma.inv}
	\tf(\gamma g) &= \rho(\gamma) \tf(g) \qquad\text{for all $\gamma\in\Gamma$},\\
\label{K-type}
\tf(g k_\theta) &= e^{i k \theta}\,\tf(g)\qquad\text{for all $k_\theta :=
\left(\begin{smallmatrix} \cos(\theta)&\sin(\theta)\\-\sin(\theta)&\cos(\theta)\end{smallmatrix}\right) \in K=\text{\rm SO}(2)$,}
\end{align}
and the growth condition
\begin{equation}\label{growth.cond}
\tf(g) = O\left(e^{Bv}\right), \qquad g(i) = u+iv,
\end{equation}
for a constant $B>0$, uniformly in $u$, as $v\rightarrow \infty$.
Again, similar conditions hold at all cusps of $\Gamma$.
As a good reference for this construction as well as the calculations to follow is \cite{verdier}, see also Chapter~1 of \cite{vogan.book}.

Let
$$
H := i \bpm 0&-1\\1&0\epm, \qquad X_+ := \frac12 \bpm 1&i \\ i&-1\epm, \qquad  X_- := \frac12 \bpm 1&-i \\ -i&-1\epm,
$$
be the standard basis for the complexified Lie algebra $\g$ of $G$.  Note that $H$ spans $\mathfrak k$, the complexified Lie algebra of $K$, and that condition (\ref{K-type}) on $\tf$ is equivalent to the condition $H \tf = k \tf$.
Recall that, if $\tf$ is the lift of $\uf $ to $G$, then $X_+ \tf$ is the lift of $R_k \uf $ and $X_-\tf$ is the lift of $L_k\uf $, where $L_k$ and $R_k$ are defined in \eqref{raising} and \eqref{lowering}, respectively.

The Casimir operator is the element of the universal enveloping algebra $U(\g)$ of $\g$ defined by
\begin{equation}\label{casimir}
C:= H^2 + 2 X_+ X_- + 2 X_- X_+
= (H-1)^2 + 4 X_+X_--1
= (H+1)^2 + 4 X_-X_+-1.
\end{equation}
Note that $C$ spans the center of $U(\g)$ and acts by a scalar in any irreducible $(\g,K)$-module. 
Using the second expression for $C$,  it is immediate that the condition $\Delta_k\uf=0$ is equivalent to
\begin{equation}\label{Casimir.cond}
C\tf = \left(\left(k-1\right)^2-1\right)\tf.
\end{equation}
Note that, for $k\in -\N_0$, the flipping operator lifts to
$$\widetilde{\mathfrak{F}_kf} =\frac{1}{(-k)!}\,\overline{X_+^{-k}\tilde{f}},$$
 and the identity (\ref{involution}) amounts to
 $$X_-^{-k}\,X_+^{-k}\,\tilde{f} = (-k)!^2\,\tilde f.$$

Let $A(G,V)$ be the space of all $K$-finite\footnote{A function in this space is a finite linear combination of functions satisfying
(\ref{K-type}) for various weights $k$.}, $C^\infty$-functions on $G$, which are valued in $V$, and let
$A(G,V;\Gamma)$ be the subspace such that (\ref{gamma.inv}) holds. Note that this subspace is a $(\g,K)$ submodule.
For a harmonic Maass form $\uf$ of weight $k$, the corresponding function $\tf$ on $G$ lies in $A(G,V;\Gamma)$ and satisfies
conditions (\ref{K-type}) and (\ref{Casimir.cond}), as well as the growth condition (\ref{growth.cond}), which we do not use for the moment.
We want to describe the $(\g, K)$-submodule $M(\tf)$  of $A(G,V)$ generated by $\tf$.

For $j= k\pm2r\in k+2\Z$, $r\in \N_0$, let
\begin{equation}\label{first-up-down}
\tf_j := X_\pm^r\,\tf.
\end{equation}
The following fact is classically well-known. 

\begin{proposition}
	The $(\g,K)$-submodule $M(\tf)$  of $A(G,V)$ generated by $\tf= \tf_k$ is spanned by the $\tf_j$.  Moreover, for $r\in\N$,
	\begin{align}
		X_- \tf_{k+2r} &=  r(1-k-r)\,\tf_{k+2(r-1)},\quad{\rm and}\label{movedown}\\
		X_+ \tf_{k-2r} &= -(r-1)(r-k)\tf_{k-2(r-1)}.\notag
	\end{align}
	In particular,  the following vanishing always occurs:
	\begin{align}
		X_+\tf_{k-2} &=0, \notag\\
		X_- \tf_{2-k} &=0  &&\text{if $k<1$ {\rm \text{\quad (so that $r = 1-k >0$ in (\ref{first-up-down})})}},\label{autovan2}\\
		X_+\tf_{-k}&=0 &&\text{if $k>0$ {\rm \text{\quad (so that $r = k>0$ in (\ref{first-up-down})})}}.\label{autovan3}
	\end{align}
\end{proposition}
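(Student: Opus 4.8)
The plan is to reduce everything to the $\mathfrak{sl}_2$ commutation relations together with the two scalar conditions $H\tf = k\tf$ and $C\tf = \lambda\tf$, where $\lambda := (k-1)^2-1$, satisfied by the generating vector. From the three expressions for $C$ in \eqref{casimir} one reads off the bracket relations
\begin{equation*}
[H,X_\pm] = \pm 2\,X_\pm, \qquad [X_+,X_-] = H,
\end{equation*}
which may alternatively be verified by direct matrix multiplication. A first consequence, by induction on $r$ using $[H,X_\pm]=\pm 2 X_\pm$, is that $\tf_j$ is an $H$-eigenvector of eigenvalue $j$, which justifies the subscript notation. Moreover, since $C$ is central, each $\tf_j$ has the same Casimir eigenvalue $\lambda$ as $\tf$; this observation is the conceptual crux, as it turns the base-case identities into pure scalar computations.

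Next I would establish the two $r=1$ base cases directly. Evaluating $C=(H+1)^2+4X_-X_+-1$ on $\tf$ and using $H\tf=k\tf$, $C\tf=\lambda\tf$ gives $4X_-X_+\tf = (\lambda-(k+1)^2+1)\tf=-4k\,\tf$, i.e. $X_-\tf_{k+2}=-k\,\tf_k$, matching $r(1-k-r)$ at $r=1$. Evaluating $C=(H-1)^2+4X_+X_--1$ on $\tf$ gives $4X_+X_-\tf=(\lambda-(k-1)^2+1)\tf=0$, i.e. $X_+\tf_{k-2}=0$, matching $-(r-1)(r-k)$ at $r=1$.

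For the general step I would argue by induction on $r$, moving one factor past the rest using the brackets. For the first formula, write $X_-\tf_{k+2r}=X_-X_+(X_+^{r-1}\tf)=(X_+X_--H)(X_+^{r-1}\tf)$; applying the inductive hypothesis to $X_-X_+^{r-1}\tf$ and using that $X_+^{r-1}\tf=\tf_{k+2(r-1)}$ has $H$-eigenvalue $k+2(r-1)$ produces a scalar multiple of $\tf_{k+2(r-1)}$, and collecting terms yields the coefficient $r(1-k-r)$. The second formula is entirely analogous, via $X_+\tf_{k-2r}=X_+X_-(X_-^{r-1}\tf)=(X_-X_++H)(X_-^{r-1}\tf)$ and the $H$-eigenvalue $k-2(r-1)$ of $X_-^{r-1}\tf$, yielding $-(r-1)(r-k)$. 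I expect the only point requiring care to be the simplification of the resulting quadratic in $r$ in each step; this is routine bookkeeping rather than a genuine obstacle, consistent with the remark that the result is an easy exercise.

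The three vanishing statements are then immediate specializations: $X_+\tf_{k-2}=0$ is the $r=1$ instance of the second formula; $X_-\tf_{2-k}=0$ for $k<1$ is the first formula at $r=1-k\in\N$, where the factor $(1-k-r)$ vanishes; and $X_+\tf_{-k}=0$ for $k>0$ is the second formula at $r=k\in\N$, where the factor $(r-k)$ vanishes. Finally, for the spanning assertion, set $W:=\operatorname{span}\{\tf_j\}$. Each $\tf_j$ is a $K$-eigenvector, so $W$ is $K$-stable, while $H$ acts diagonally on the $\tf_j$; the two recursions, together with the definitions $\tf_{k+2(r+1)}=X_+\tf_{k+2r}$ and $\tf_{k-2(r+1)}=X_-\tf_{k-2r}$, show that $X_\pm W\subseteq W$. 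Hence $W$ is a $(\g,K)$-submodule containing $\tf$, so $M(\tf)\subseteq W$; the reverse inclusion holds because every $\tf_j$ lies in $M(\tf)$ by construction, giving $M(\tf)=W$.
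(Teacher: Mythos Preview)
Your proof is correct. The paper takes a slightly more streamlined route: rather than inducting on $r$ via the bracket $[X_+,X_-]=H$, it applies the Casimir identity directly at each weight. From $4X_-X_+ = C-(H+1)^2+1$ and the fact that every $\tf_j$ is simultaneously an $H$-eigenvector (eigenvalue $j$) and a $C$-eigenvector (eigenvalue $(k-1)^2-1$), one gets in one stroke
\[
4\,X_-X_+\,\tf_{k+2(r-1)} = \big((1-k)^2 - (k+2r-1)^2\big)\,\tf_{k+2(r-1)},
\]
and since $X_+\tf_{k+2(r-1)}=\tf_{k+2r}$ by definition, the formula $X_-\tf_{k+2r}=r(1-k-r)\tf_{k+2(r-1)}$ drops out with no recursion. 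The second identity is handled symmetrically using $4X_+X_- = C-(H-1)^2+1$. Your inductive version reaches the same conclusion with a bit more bookkeeping; the paper's version trades the induction for the observation (which you also make) that the Casimir acts by the same scalar on every $\tf_j$, exploited at each weight separately rather than only at the base case.
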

\begin{proof}  For convenience, we set $\nu= 1-k$.
	By (\ref{casimir}), we have
	\begin{align*}
		4 X_+X_- & = C - (H-1)^2 +1,\\
		4 X_-X_+ &= C- (H+1)^2+1,
	\end{align*}
	and hence, recalling that $C$ is in the center of the enveloping algebra,
	\begin{align}
		4 X_+X_- \,\tf_j &= \left(\left(\nu^2-1\right) -(j-1)^2+1\right)\,\tf_j,  \label{multi-one-1}\\
		4 X_-X_+ \,\tf_{j-2}&= \left(\left(\nu^2-1\right) -(j-1)^2+1\right)\,\tf_{j-2}.  \label{multi-one-2}
	\end{align}
	Now for $j= k+2r>k$, the second equation gives us
\[
	X_- \tf_{j} =  \frac14\left(\left(\nu^2-1\right) -(2r-\nu)^2+1\right) \tf_{j-2},\quad\text{i.\,e.,}\quad
	X_- \tf_{k+2r} =  r(\nu-r)\, \tf_{k+2(r-1)}.
	\]
	Similarly, for $r>0$, the first equation yields
	$$
	X_+ \tf_{k-2r} = -(r-1)(\nu+r-1) \tf_{k-2(r-1)}.
	$$
\end{proof}

\begin{remark}\label{K-type-remark}
 Note that it is precisely the harmonicity condition (\ref{Casimir.cond}) which yields the relations (\ref{multi-one-1})
and (\ref{multi-one-2}).  These imply that the $K$-types in $M(\tf)$ have multiplicity $1$, a fact which is
crucial to our determination of this module. If condition (\ref{Casimir.cond}) is relaxed, higher multiplicities and more complicated things can occur
as shown by some examples in Section~\ref{section.related}.
\end{remark}
\begin{remark} It is useful to note that for $k<1$ and $\nu=1-k$, the vanishing $X_- \tf_{2-k} =0$  in (\ref{autovan2})
implies that, for a holomorphic form $f$,
\begin{equation}\label{trick}
R^\nu_k \uf (\tau) =\left(2 i \frac{\partial}{\partial \tau}\right)^{\nu}\uf(\tau),
\end{equation}
i.e., Bol's identity (\ref{classic-bol}).
\end{remark}

\section{Some standard $(\g,K)$ modules}
Before continuing our analysis, we review the structure of the reducible principal series
representations of $G$. This provides the irreducible $(\g,K)$-modules from which $M(f)$ is built. A nice reference for this material is Chapter~1 of
\cite{vogan.book}. Here we use the notation
$$m(a) := \begin{pmatrix} a&{0}\\{0}&a^{-1}\end{pmatrix}, \qquad n(b) := \begin{pmatrix} 1&b\\{0}&1\end{pmatrix}, \quad\text{and}\quad
k_\theta = \begin{pmatrix} \cos(\theta)&\sin(\theta)\\-\sin(\theta)&\cos(\theta)\end{pmatrix}.$$
For $\varepsilon\in \{0,1\}$ and $\nu\in \C$, let $I^{\text{\rm sm}}(\varepsilon,\nu)$,  be the principal series representation of $G$
given by right multiplication on the space of smooth functions\footnote{Hence the superscript `sm'.}
$\phi$ such that
$$
\phi(n(b) m(a) g) = \sgn(a)^\varepsilon\,|a|^{\nu+1}\,\phi(g).
$$
Let $I(\varepsilon,\nu)$ be the corresponding $(\g,K)$-module of $K$-finite functions.
For $j \equiv \varepsilon\pmod{2}$, let
$\phi_j\in I(\varepsilon,\nu)$ be the function such that
\begin{equation}\label{ind-K-type}
\phi_j(k_\theta) = e^{j \theta i}.
\end{equation}
These functions give a basis for $I(\e,\nu)$, and and easy calculation shows that
\begin{equation}\label{PS.action}
	X_{\pm} \phi_j = \frac12(\nu+1\pm j)\phi_{j\pm2}.
\end{equation}
From this it is immediate that 
\begin{equation}
C \phi_j =\Big((j-1)^2 + (\nu+1-j) (\nu+1+j-2)-1\Big)\phi_j = \left(\nu^2-1\right) \phi_j.
\end{equation}
The $(\g,K)$-module $I(\varepsilon,\nu)$ is irreducible unless $\nu$ is an integer and $\nu-1\equiv \varepsilon\pmod{2}$. In that case,  $I(\varepsilon,\nu)$ is not irreducible and $\nu$ determines $\varepsilon$, so we write simply ${I(\nu) = I(\varepsilon,\nu)}$. The structure of $I(\nu)$ is well-known and can easily be derived from (\ref{PS.action}).

\begin{enumerate}
	\item
	If $\nu>0$, then there are two irreducible submodules, the holomorphic (resp. anti-holomorphic) discrete series representation
	$$
	\DS^{+}(\nu) = [\phi_{\nu+1}, \phi_{\nu+3}, \dots, ],\qquad(\text{resp. }
	\DS^-(\nu) = [\dots, \phi_{-\nu-3},\phi_{-\nu-1}])
	$$
	with lowest (resp. highest) weight $k$ (resp. $-k$), and a unique irreducible quotient
	\begin{equation}\label{Is0-decomp}
	\FD(\nu) = [\phi_{-\nu+1}, \dots, \phi_{\nu-1}] \pmod{ \DS^+(\nu)\oplus \DS^-(\nu)}
	\end{equation}
	so that there is an exact sequence
	$$0\lra \DS^+(\nu)\oplus \DS^-(\nu) \lra I(\nu) \lra \FD(\nu)\lra 0.$$
	In particular, $\FD(\nu)$ is the finite dimensional representation of $G$ of dimension $\nu$.
	
	\item
	If $\nu<0$, then there is a unique irreducible submodule
	$$
	\FD(-\nu) = [\phi_{\nu+1}, \dots, \phi_{-\nu-1}],
	$$
	and two irreducible quotients, $\DS^{\pm}(-\nu)$, and so an exact sequence
	$$0\lra \FD(-\nu)\lra I(\mu) \lra \DS^+(-\nu)\oplus \DS^-(-\nu)\lra 0.$$
	
	\item
	If $\nu=0$, there are two irreducible summands, the limits of discrete series
	$$
	\LDS^+(0) = [\phi_1,\phi_3,\dots]\qquad\text{and}\qquad
	\LDS^-(0) = [\dots, \phi_{-3}, \phi_{-1}],
	$$
	with lowest (resp. highest) weight $1$ (resp. $-1$).
\end{enumerate}

In each of the cases, we have, with $k=1-\nu$,
$$
X_+^r \phi_k = r! \,\phi_{k+2r},\quad\text{and}\quad
X_-^r\phi_k = \nu(\nu+1)(\nu+2) \cdots (\nu+r-1)\,\phi_{k-2r},
$$
and, an easy calculation gives
$$
X_- X_+^r \phi_k = r(\nu-r) \, X_+^{r-1}\,\phi_k, \quad\text{and}\quad
X_+X_-^r\phi_k =  -(r-1)(\nu+r-1)\,X_-^{r-1}\phi_k.
$$

\section{A Classification}
Now returning to the subspace $M(\tf)$ of $A(G,V)$ generated by $\tf=\tf_k$, we distinguish the three cases $k<1$, $k=1$, and $k>1$.
The transition equations (\ref{movedown})--(\ref{autovan3}) then show when it is possible to move up and
down among the $\tf_j$'s and hence reveal the possible module structures for $M(\tf)$ as an abstract $(\g,K)$-module.
It turns out that there are 9 cases. Recall that $\nu=1-k$.

\begin{theorem}\label{abstractMf}\hfill\break
                {\bf I.}  Suppose that $k<1$, so that $\nu> 0$. Then the structure of $M(\tf)$ is
                determined by the functions $\tf_{k-2}$ and $\tf_{k+2\nu} = \tf_{2-k}$.  More precisely, $M(\tf)$ is isomorphic to
		\begin{enumerate}
			\item[(a)]  $\FD(\nu)$  if $\tf_{k-2}=0$ and $\tf_{k+2\nu}=0$,
			\item[(b)] $I(\nu)/\DS^-(\nu)$  if $\tf_{k-2}= 0$ and $\tf_{k+2\nu}\ne0$, so that there is an exact sequence
			$$0\lra \DS^+(\nu)\lra M\left(\tf\right) \lra \FD(\nu)\lra 0,$$
			\item[(c)]  $I(\nu)/\DS^+(\nu)$ if $\tf_{k-2}\ne 0$ and $\tf_{k+2\nu}=0$, so that there is an exact sequence
			$$0\lra \DS^-(\nu)\lra M\left(\tf\right) \lra \FD(\nu)\lra 0,$$
			\item[(d)] $I(\nu)$ if $\tf_{k-2}\ne 0$ and $\tf_{k+2\nu}\ne 0$, so that there is an exact sequence
			$$0\lra \DS^+(\nu)\oplus \DS^-(\nu)\lra M\left(\tf\right) \lra \FD(\nu)\lra 0.$$
			\item[]  Moreover, the sequences in cases {\rm(b)}, {\rm(c)}, and {\rm(d)} are not split.
		\end{enumerate}
		{\bf II.} Suppose that $k=1$ so that $\nu=0$.
		\begin{enumerate}
			\item[(a)]
			If $\tf_{-1}=0$, then $M(\tf) \simeq \LDS^+(0)$.
			
			\item[(b)]
			If $\tf_{-1}\ne 0$, then there is a non-split extension
			$$
			0\lra \LDS^-(0) \lra  M\left(\tf\right) \lra  \LDS^+(0)\lra  0.
			$$
		\end{enumerate}

		{\bf III.} Suppose that $k>1$ so that $\nu<0$. Then the structure of $M(\tf)$ is determined by the functions $\tf_{k-2}$ and $\tf_{-k}$.
		\begin{enumerate}
			\item[(a)]
			If $\tf_{k-2}=0$, then $M(\tf)\simeq \DS^+(-\nu)$.
			
			\item[(b)]
			If $\tf_{k-2}\ne 0$ and $\tf_{-k}=0$, then there is a non-split extension
			$$
			0\lra  \FD(-\nu)\lra  M\left(\tf\right) \lra  \DS^+(-\nu)\lra  0
			$$
			so that $M(\tf)$ is isomorphic to the submodule of $I(\nu)$ generated by $\phi_k$.
			
			\item[(c)]
			If $\tf_{k-2}\ne 0$ and $\tf_{-k}\ne 0$, then there is a (socle) composition series of length $3$,
			$$
			F^2M\left(\tf\right) \subset F^1M\left(\tf\right) \subset M\left(\tf\right),
			$$
			with $F^2M(\tf) \simeq \DS^-(-\nu)$, $F^1M(\tf)/F^2M(\tf) \simeq \FD(-\nu)$, and $M(\tf) /F^1M(\tf) \simeq \DS^+(-\nu)$.
		\end{enumerate}
\end{theorem}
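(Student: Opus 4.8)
The plan is to lean on the two outputs of the Proposition: $M(\tf)$ is spanned by the weight vectors $\tf_j$, so by Remark~\ref{K-type-remark} every $K$-type occurs with multiplicity at most one, and the operators $X_\pm$ act on the $\tf_j$ through the explicit transition formulas \eqref{movedown}. Multiplicity one is the decisive structural fact: any $(\g,K)$-submodule is automatically the span of the $\tf_j$ it contains, so the module is completely pinned down once I know (i) which $\tf_j$ are nonzero and (ii) at which vectors a transition map vanishes. I would extract this data from \eqref{movedown}--\eqref{autovan3}, match the resulting ladder against the constituents $\DS^\pm$, $\FD$, $\LDS^\pm$ of $I(\nu)$ recalled in the previous section, and finally verify that the relevant extensions do not split.

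First I would record the propagation mechanism for (non)vanishing. Going up one has $\tf_{j+2}=X_+\tf_j$ for $j\ge k$, so vanishing propagates upward for free; conversely, whenever $X_-\tf_j=c\,\tf_{j-2}$ with $c\neq0$, the vector $\tf_j$ is forced to be nonzero as soon as $\tf_{j-2}\neq0$, for otherwise the left-hand side would vanish. The mirror statements hold going down. Inspecting the coefficients $r(\nu-r)$ and $-(r-1)(\nu+r-1)$ in \eqref{movedown}, the only vectors at which a transition coefficient can vanish are $\tf_{k-2}$ (always, via $X_+$), $\tf_{2-k}=\tf_{k+2\nu}$ when $k<1$ (via $X_-$), and $\tf_{-k}$ when $k>0$ (via $X_+$), which are exactly the endpoints \eqref{autovan2}--\eqref{autovan3}. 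Hence the pattern of nonzero vectors is governed entirely by the on/off switches $\tf_{k-2}\stackrel{?}{=}0$ and $\tf_{2-k}\stackrel{?}{=}0$ (for $k<1$), $\tf_{-1}\stackrel{?}{=}0$ (for $k=1$), and $\tf_{k-2}\stackrel{?}{=}0,\ \tf_{-k}\stackrel{?}{=}0$ (for $k>1$); every other $\tf_j$ whose incoming coefficient is nonzero is then forced nonzero by the mechanism above. Here it matters that $\tf_k$ has weight $k=1-\nu=-\nu+1$, placing it at the bottom of the $\FD(\nu)$-range when $\nu>0$ and at the bottom of the $\DS^+$-range when $\nu<0$.

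Then I carry out the three cases. For $k<1$ the vectors $\tf_k,\tf_{k+2},\dots,\tf_{\nu-1}$ are always nonzero and project to the finite-dimensional quotient $\FD(\nu)$; the upward tail $\tf_{2-k},\tf_{4-k},\dots$ is, by $X_-\tf_{2-k}=0$, a lowest-weight submodule $\cong\DS^+(\nu)$ present iff $\tf_{2-k}\neq0$, while the downward tail $\tf_{k-2},\tf_{k-4},\dots$ is, by $X_+\tf_{k-2}=0$, a highest-weight submodule $\cong\DS^-(\nu)$ present iff $\tf_{k-2}\neq0$; the four combinations give I(a)--(d). For $k=1$ the upward string $\tf_1,\tf_3,\dots$ gives the quotient $\LDS^+(0)$ and the downward string $\tf_{-1},\tf_{-3},\dots$ the submodule $\LDS^-(0)$, present iff $\tf_{-1}\neq0$, yielding II(a),(b). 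For $k>1$ the upward tail from $\tf_k$ is the quotient $\DS^+(-\nu)$; if $\tf_{k-2}\neq0$ the block $\tf_{k-2},\dots,\tf_{k+2\nu}$ realizes $\FD(-\nu)$, and if moreover $\tf_{-k}\neq0$ the tail $\tf_{-k},\tf_{-k-2},\dots$ realizes the submodule $\DS^-(-\nu)$; this gives III(a),(b) and the length-three socle filtration of III(c).

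The one point needing a genuine argument is the non-splitness asserted in I(b)--(d), II(b), and III(b), and this is where multiplicity one does the work. A splitting would furnish a submodule isomorphic to the quotient; being spanned by a subset of the $\tf_j$ and closed under $\g$, it would have a top (resp.\ bottom) weight vector annihilated by $X_+$ (resp.\ $X_-$). But at the offending endpoint the transition map is nonzero: in I(b) a complementary $\FD(\nu)$ would force $X_+\tf_{\nu-1}=0$, whereas $X_+\tf_{\nu-1}=\tf_{2-k}\neq0$; the analogous computation at $\tf_k$ (for I(c) and III(b)) or at $\tf_1$ (for II(b)) rules out the remaining splittings, and I(d) fails at both ends simultaneously. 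I expect the bookkeeping for III(c) --- checking that $\{\tf_j:j\le -k\}$ and $\{\tf_j:j\le k-2\}$ are correctly nested submodules $F^2M(\tf)\subset F^1M(\tf)$ with successive quotients exactly $\DS^-(-\nu)$, $\FD(-\nu)$, and $\DS^+(-\nu)$ --- to be the most error-prone step, though it remains routine once the transition equations are in hand.
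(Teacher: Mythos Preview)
Your proposal is correct and follows exactly the route the paper indicates: the paper does not give a detailed proof but simply asserts that ``the transition equations \eqref{movedown}--\eqref{autovan3} then show when it is possible to move up and down among the $\tf_j$'s and hence reveal the possible module structures,'' calling it ``an easy exercise.'' Your write-up supplies precisely the details this sketch omits---the multiplicity-one consequence that every submodule is a span of $\tf_j$'s, the propagation of (non)vanishing via the transition coefficients, the matching against the constituents of $I(\nu)$ from Section~4, and the non-splitness argument via the nonzero endpoint transitions---so there is nothing to add.
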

Theorem \ref{abstractMf} immediately implies
\begin{corollary}\label{sub-quotes}
\begin{enumerate}
		\item[(i)]
		If $k<1$, i.e., in cases I(a)-(d),    $M(\tf)$ is isomorphic to a quotient of $I(\nu)$.
		
		\item[(ii)]
		In cases III(a) and III(b), where $k>1$, and in case II(a), where $k=1$, $M(\tf)$ is isomorphic to a subquotient of $I(\nu)$.
		
		\item[(iii)]
		In cases III(c), where $k>1$,  and II(b), where $k=1$, $M(\tf)$ is not isomorphic to a subquotient of $I(\nu)$.
	\end{enumerate}
\end{corollary}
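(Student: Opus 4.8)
The plan is to exploit the multiplicity-one phenomenon recorded in Remark~\ref{K-type-remark}: since each $K$-type $j\in k+2\Z$ is spanned by the single vector $\tf_j$, the module $M(\tf)$ is a ``string,'' and its entire isomorphism class is pinned down by two pieces of data, namely (i) the precise set of indices $j$ with $\tf_j\ne0$, and (ii) the way the operators $X_+$ and $X_-$ connect consecutive nonzero weights. Both are governed by the transition relations (\ref{movedown})--(\ref{autovan3}). I would organize the argument by the three regimes $k<1$, $k=1$, $k>1$ exactly as in the statement, treating each uniformly.

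First I would determine which $\tf_j$ vanish. Starting from $\tf_k=\tf\ne0$ one propagates nonvanishing along the string by means of the transition relations of the Proposition, namely (\ref{movedown}) together with $X_+\tf_{k-2r}=-(r-1)(\nu+r-1)\,\tf_{k-2(r-1)}$: whenever the scalar coefficient attaching two adjacent weights is nonzero, vanishing of one would force vanishing of the other. The coefficients $r(\nu-r)$ and $-(r-1)(\nu+r-1)$ are nonzero away from the three automatic zeros $X_+\tf_{k-2}=0$, $X_-\tf_{2-k}=0$ (for $k<1$) and $X_+\tf_{-k}=0$ (for $k>0$) of (\ref{autovan2})--(\ref{autovan3}). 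These zeros are exactly the places where the string can break, and they isolate the central finite-dimensional block (weights from $k$ to $-k$ when $k<1$, from $2-k$ to $k-2$ when $k>1$) from the discrete-series tails. Consequently the only remaining freedom is whether the boundary vectors $\tf_{k-2}$ and $\tf_{k+2\nu}=\tf_{2-k}$ (resp.\ $\tf_{k-2}$ and $\tf_{-k}$, resp.\ $\tf_{-1}$) vanish, and this produces precisely the subcases (a)--(d) of I, (a)--(b) of II, and (a)--(c) of III.

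Next I would read off the submodule structure. Again by multiplicity one, every $\g$-submodule is the span of an $X_\pm$-stable subset of the nonzero $\tf_j$, so the submodules are visible by inspection: a vector killed by $X_-$ (such as $\tf_{2-k}$) generates a holomorphic discrete series $\DS^+$ going up, a vector killed by $X_+$ (such as $\tf_{k-2}$, or $\tf_{-k}$ in case III) generates an (anti)holomorphic discrete series going down, and the central block projects to $\FD$. Matching these against the description of $I(\nu)$ and of $\DS^\pm$, $\FD$, $\LDS^\pm$ recalled above, together with (\ref{Is0-decomp}), names each isomorphism type and yields the stated exact sequences; in case III(c) the filtration $F^2M\subset F^1M\subset M$ is the resulting socle series.

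The main obstacle is the non-splitting assertions. Here I would again use multiplicity one: a splitting would force the complementary collection of weights to be an $X_\pm$-stable submodule as well, and in each case I would exhibit a single generator that is carried across the would-be complement by a nonzero transition coefficient. For instance, in I(b) the only candidate complement to $\DS^+(\nu)$ is the central block $\{\tf_k,\dots,\tf_{-k}\}$, but $X_+\tf_{-k}=\tf_{2-k}\ne0$ escapes it; in II(b) the candidate complement $\LDS^+(0)$ fails because $X_-\tf_1=\tf_{-1}\ne0$; the remaining cases are identical in spirit. Finally, Corollary~\ref{sub-quotes} follows by comparison with $I(\nu)$: cases I are quotients of $I(\nu)$ by construction, and III(a), III(b), II(a) are visibly subquotients, whereas II(b) cannot be (it is non-semisimple while $I(0)\cong\LDS^+(0)\oplus\LDS^-(0)$ is semisimple, so all its subquotients are semisimple) and III(c) cannot be (it has the same length and composition factors as $I(\nu)$, so a subquotient isomorphism would force $M(\tf)\cong I(\nu)$, which is impossible since the two have different socles, $\DS^-(-\nu)$ versus $\FD(-\nu)$).
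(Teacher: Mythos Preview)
Your proposal is correct, and the portion that actually addresses Corollary~\ref{sub-quotes} (your final paragraph) follows the same line the paper does: the paper simply records that ``Theorem~\ref{abstractMf} immediately implies'' the corollary, and your comparison with the structure of $I(\nu)$ spells out what that immediacy means. Your arguments for part~(iii)---that $I(0)\cong\LDS^+(0)\oplus\LDS^-(0)$ is semisimple so admits no non-semisimple subquotient, and that in III(c) equality of length and composition factors would force $M(\tf)\cong I(\nu)$, contradicted by the socle comparison $\DS^-(-\nu)$ versus $\FD(-\nu)$---are the right way to make the claim precise and are not given explicitly in the paper.

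One remark: the first three paragraphs of your proposal re-derive Theorem~\ref{abstractMf} itself (the vanishing pattern, the submodule structure, and the non-splitting). That is more than the corollary requires, since you are entitled to cite the theorem. If you intend this only as a proof of the corollary, you can begin directly from the statements in Theorem~\ref{abstractMf} and go straight to the comparison in your last paragraph.
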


\begin{remark}\label{remark4}
For convenience, we summarize what the various cases amount to in classical language.  Suppose that $\uf $ is a harmonic Maass form of weight $k$, and, for $r\in\mathbb{N}_0$, set $\uf _{k+2r} := R_{k}^r\uf $ (resp.  $\uf _{k-2r}:= L_k^r\uf $)
for the image of $\uf $ under the $r$-fold application of the raising (resp. lowering) operator.
\begin{enumerate}
\item[\bf I.]  Here $k<1$. The subcases correspond to the following:
\begin{enumerate}
\item[(a)]  $L_k\uf=\uf _{k-2}=0$ and $R_k^{1-k}\uf=\uf_{2-k}=0$, 
\item[(b)]  $L_k\uf=\uf_{k-2}= 0$ and $R_k^{1-k}\uf=\uf_{2-k}\ne0$,
\item[(c)]  $L_k\uf=\uf_{k-2}\ne 0$ and $R_k^{1-k}\uf = \uf_{2-k}=0$,
\item[(d)]  $L_k\uf = \uf_{k-2}\ne 0$ and $R_k^{1-k}\uf=\uf_{k+2\nu}\ne 0$.
\end{enumerate}
\item[\bf II.]  Here $k=1$. The subcases correspond to the following:
\begin{enumerate}
	\item[(a)]
	$L_1\uf=0$, i.\,e., $\uf$ is a weakly holomorphic modular form of weight $1$,

	\item[(b)]
	$L_1\uf \ne 0$,  i.\,e., $\xi_1\uf$ is a weakly holomorphic modular form of weight $1$.
\end{enumerate}

\item[\bf III.]  Here $k>1$. The subcases correspond to the following:
\begin{enumerate}
\item[(a)]  $L_k\uf=\uf_{k-2}=0$, i.\,e., $\uf$ is a
weakly holomorphic modular form of weight $k$,
\item[(b)]  $L_k\uf=\uf_{k-2}\ne 0$ and $L_k^{k}\uf=\uf_{-k}=0$. i.\,e., $L_k^{k-1}\uf$ is holomorphic of weight $2-k$,
\item[(c)]  $L_k\uf=\uf_{k-2}\ne 0$ and $L_k^{k}\uf= \uf_{-k}\ne 0$.
\end{enumerate}

\end{enumerate}

\end{remark}

Of course, the cases listed in the theorem are simply possibilities. Our goal is to determine which
of them can actually arise as subspaces of $A(G,V;\Gamma)$ and for which $(\rho,V)$. Of course, the cases
I(a) and II(a), where $k\in\N$, arise as irreducible submodules $M(\tf)$ generated by the lifts to $G$ of holomorphic cusp forms of weight $k$.
On the other hand,
all other cases involve non-split extensions and hence cannot occur as subspaces of $L^2(\Gamma\backslash G)$.
In fact, we show the following:

\begin{theorem}  All possible cases enumerated in Theorem~\ref{abstractMf} arise as $M(\tf)$'s.
\end{theorem}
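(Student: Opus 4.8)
**The plan is to realize each of the nine cases from Theorem~\ref{abstractMf} by writing down an explicit harmonic Maass form $f$ (scalar- or vector-valued) whose lift $\tf$ generates the required $(\g,K)$-module.** The crucial structural input is that, by Theorem~\ref{abstractMf}, the isomorphism class of $M(\tf)$ is completely determined by the vanishing or non-vanishing of the two ``boundary'' functions flagged in each case (namely $\tf_{k-2}=L_kf$, and either $\tf_{2-k}=R_k^{1-k}f$ when $k<1$ or $\tf_{-k}=L_k^{k}f$ when $k>1$). So for each case I only need to exhibit a form for which these specific raising/lowering images vanish or fail to vanish as prescribed; the rest of the module structure is then forced. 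In classical terms this is exactly the checklist in Remark~\ref{remark4}, so the verification for each example reduces to computing $L_kf$ and an iterated $R_k^{1-k}f$ or $L_k^{k}f$.

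\textbf{First I would dispatch the easy irreducible cases.} Cases I(a) and II(a) arise from holomorphic objects: the $\mathcal P_m$-valued form $e_{m,0}$ of weight $-m$ realizes I(a) as the finite-dimensional module $M(\widetilde{e_{m,0}})$ of dimension $m+1$ (already computed in the introduction via the transformation law for $e_{r,m-r}$), and a weakly holomorphic weight-$1$ form gives II(a) with $M(\tf)\simeq\LDS^+(0)$. Case III(a) likewise comes from a weakly holomorphic form of weight $k>1$, where $L_kf=0$ forces $M(\tf)\simeq\DS^+(-\nu)$. For the genuinely harmonic scalar examples realizing the non-split extensions, I would use the Poincar\'e series $F_{k,m}$ and $F_{k,-m}$ introduced at the end of Section~\ref{section2}, together with the $\xi_k$- and $D^{1-k}$-operator identities \eqref{xiact}, \eqref{D-action}, to control which of $L_kf$, $R_k^{1-k}f$, $L_k^{k}f$ vanish; the flip operator $\mathfrak F_k$ and the relations \eqref{xiflip}, \eqref{Dflip} let me pass between $H_k(\Gamma)$ and $H_k^\sharp(\Gamma)$ and thereby toggle the relevant boundary conditions. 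Case II(b) is handled by the weight-$1$ incoherent Eisenstein series derivative of \cite{kry.tiny}, and case III(b) by the Eisenstein family $E^*_{m+2}$ of the introduction, for which it is shown that $L_{m+2}E^*_{m+2}=\tfrac{3}{\pi}e_{0,m}\ne0$ while $e_{0,m}$ is annihilated by $R_m$, forcing $L_k^{k}f=0$ and hence exactly III(b).

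\textbf{The main obstacle is the cases that provably cannot occur for scalar-valued forms}, which is precisely why vector-valued forms are declared essential in the introduction. Here the subtlety flagged in Remark~\ref{Fourier-remark} becomes the central mechanism: for a symmetric tensor representation $(\rho_m,\mathcal P_m)$ the Fourier expansion is governed by the twisted function $f^*(\tau)=\rho\bigl(\kzxz{1}{\tau}{0}{1}\bigr)^{-1}f(\tau)$, and passing from $f$ to $f^*$ deforms the action of the Maass operators. I expect the hardest step to be arranging, for the remaining cases I(b), I(c), I(d) and III(c), a vector-valued form whose components individually fail to be harmonic in the scalar sense but whose $\mathcal P_m$-valued combination satisfies $\Delta_kf=0$ with the prescribed non-vanishing of both boundary functions simultaneously; producing a single form realizing the full length-three composition series of III(c), with $\DS^-(-\nu)$, $\FD(-\nu)$, and $\DS^+(-\nu)$ all appearing, is the most delicate because both $L_kf=\tf_{k-2}$ and $L_k^{k}f=\tf_{-k}$ must be nonzero. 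The strategy I would pursue is to build these as $\mathcal P_m$-valued forms assembled from products $e_{r,m-r}$ against scalar harmonic or nearly-holomorphic pieces (as in the $E^*_{m+2}$ construction), exploiting the twist from $f$ to $f^*$ to break the scalar vanishing constraints; once the explicit form is written, confirming membership in $H_k^{\mathrm{mg}}(\Gamma,\rho)$ and evaluating the two boundary operators is a routine, if lengthy, computation with the raising and lowering operators \eqref{raising}, \eqref{lowering}.
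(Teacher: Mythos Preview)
Your proposal misidentifies which of the nine cases require vector-valued forms. You claim that I(b), I(c), I(d), and III(c) are the ones that ``provably cannot occur for scalar-valued forms'' and therefore need a $\mathcal P_m$-valued construction. This is backwards. In the paper all four of these cases are realized by \emph{scalar} forms: I(b) by any weakly holomorphic $f$ of weight $k<1$ (e.g.\ $1/\Delta$), I(c) by its flip $\mathfrak F_k f$, I(d) by the sum $f+\mathfrak F_k f$ (or by the Eisenstein series $E_{k,1-k}$), and III(c) by a sesquiharmonic Poincar\'e series $\mathbb F_{4,1}$ which, because $S_4=\{0\}$, is in fact harmonic with $\xi_4$-image a genuinely weakly holomorphic form in $M_{-2}^!$ having a pole, so that both $L_4f$ and $L_4^4f$ are nonzero. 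The only cases that genuinely require vector-valued forms beyond the trivial instance are I(a) with $\nu>1$ (Lemma~\ref{lemma6.5} forces scalar forms to be constant) and III(b) with $k>2$ (the same argument applied to $\xi_kf$ forces it to vanish). You already handle both of these correctly via $e_{m,0}$ and $E^*_{m+2}$.

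The concrete gap, then, is your plan for III(c). You propose building a $\mathcal P_m$-valued form ``assembled from products $e_{r,m-r}$ against scalar harmonic or nearly-holomorphic pieces,'' i.e.\ the same template as $E^*_{m+2}$. But that template is designed precisely to make $L_k^{k-1}f$ land on $e_{0,m}$, which is then killed by $L_{2-k}$; it produces III(b), not III(c). To get III(c) you need $L_k^{k-1}f$ to be \emph{non}-holomorphic, and nothing in your sketch explains how the $e_{r,m-r}$ ansatz would achieve that. The paper's route is entirely different: take a weight-$k$ sesquiharmonic Poincar\'e series whose $\xi_k$-image is a weight-$(2-k)$ harmonic Maass form with nontrivial principal part; when the relevant cusp form space vanishes this image is actually weakly holomorphic with a pole, and Bol's identity then guarantees $R_{2-k}^{1-k}(\xi_kf)\ne0$, which is equivalent to $L_k^kf\ne0$.
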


\begin{remark}
This result is proved in Section~\ref{section.examples} by explicit construction. It is shown there, that certain cases can only occur for $(\rho,V)$
with $\dim (V)>1$.
\end{remark}

\section{Examples}\label{section.examples}

In this section, we provide examples for each of the possibilities for $M(\tf)$  listed in Theorem~\ref{abstractMf}.

\subsection{Case I: $k<1$}
\subsubsection{Case I(a)}
In this case, we want an automorphic realization of the finite-dimensional representation $\FD(\nu)$ of dimension $\nu$. For $k=0$ and $\nu=1$ the constant function gives a trivial example for the one-dimensional space $M(\tf)=\FD(1)$.
Moreover, as remarked by Schulze-Pillot \cite{schulze-pillot}, 
this is the only possibility of a finite-dimensional $M(\tf)$ if the representation $(\rho,V)$ is a character, i.e., for scalar-valued modular forms. For convenience of the reader, we provide a proof of this statement, which follows the approach of Schulze-Pillot.
\begin{lemma}\label{lemma6.5}
	Suppose that $k\le 0$ and that $\uf\in H_k^{\rm{mg}}(\Gamma)$ is a scalar-valued harmonic Maass form 
	with $M(\tf) = \FD(\nu)$.
	Then $\uf $ is a constant and $\nu=1$.
\end{lemma}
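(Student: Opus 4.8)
The plan is to feed the hypothesis $M(\tf)=\FD(\nu)$ into the classification of Theorem~\ref{abstractMf} to obtain explicit differential equations on $\uf$, and then to read off its Fourier coefficients. By case I(a) of Theorem~\ref{abstractMf} (and its classical reformulation in Remark~\ref{remark4}), the isomorphism $M(\tf)\simeq\FD(\nu)$ is equivalent to the two vanishing conditions $\tf_{k-2}=0$ and $\tf_{k+2\nu}=\tf_{2-k}=0$, i.e., in classical language, $L_k\uf=0$ and $R_k^{1-k}\uf=0$, where $\nu=1-k\ge 1$.

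First I would exploit $L_k\uf=0$. Since $\uf$ is scalar-valued it has a Fourier expansion as in \eqref{weak-Fourier}, and $L_k\uf=0$ forces its non-holomorphic part $\uf^-$ to vanish identically, including the constant term $c_f^-(0)\,v^{1-k}$. Hence $\uf=\uf^+=\sum_{n\gg-\infty}c_f^+(n)\,q^n$ is (weakly) holomorphic.

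Next I would use $R_k^{1-k}\uf=0$ together with Bol's identity \eqref{classic-bol}, which gives $R_k^{1-k}=(-4\pi)^{1-k}D^{1-k}$ and so turns the condition into $D^{1-k}\uf=0$. Applying the explicit action \eqref{D-action} to the holomorphic expansion above (where now $c_f^-(0)=0$) yields $\sum_{n\ne 0}c_f^+(n)\,n^{1-k}\,q^n=0$. Since $1-k\ge 1>0$, the factor $n^{1-k}$ is nonzero for every $n\ne 0$, so $c_f^+(n)=0$ for all $n\ne 0$, and therefore $\uf=c_f^+(0)$ is constant.

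Finally I would check the weight. As $M(\tf)=\FD(\nu)\ne 0$, the constant $c_f^+(0)$ is nonzero, so the weight-$k$ transformation law $\uf(\gamma\tau)=j(\gamma,\tau)^k\chi(\gamma)\uf(\tau)$ requires $(c\tau+d)^k$ to be constant in $\tau$ for every $\gamma=\kabcd\in\Gamma$. Since $\Gamma$ has finite index in $\SL_2(\Z)$, it is not contained in the upper-triangular subgroup and hence contains some $\gamma$ with $c\ne 0$; for such $\gamma$ the factor $(c\tau+d)^k$ is non-constant unless $k=0$. Thus $k=0$ and $\nu=1$, as claimed. All of these steps are short; the only point requiring care is the passage through \eqref{classic-bol} and \eqref{D-action}, where one must first have eliminated the non-holomorphic contributions via $L_k\uf=0$ so that \eqref{D-action} collapses cleanly to $\sum_{n\ne0}c_f^+(n)\,n^{1-k}q^n$. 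I expect this bookkeeping to be the main (and quite minor) obstacle, the remainder being immediate.
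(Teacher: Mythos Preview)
Your argument is correct and follows essentially the same route as the paper: from $M(\tf)=\FD(\nu)$ one extracts $L_k\uf=0$ (hence $\uf$ holomorphic) and $R_k^{1-k}\uf=0$, then converts the latter via Bol's identity into $D^{1-k}\uf=0$ and concludes constancy. The only cosmetic difference is that the paper phrases the last step as ``$D^{\nu}\uf=0$ implies $\uf$ is a polynomial in $\tau$, and a periodic polynomial is constant,'' whereas you read off the vanishing of the nonzero-index Fourier coefficients directly from \eqref{D-action}; these are the same observation. Your final paragraph deducing $k=0$ from the transformation law is a point the paper leaves implicit.
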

\begin{proof} The condition $L_kf=0$ implies that $f$ is holomorphic.
Then the condition $D^\nu f=0$ implies that $f$ is a polynomial. Since $f$ is invariant under $\tau\mapsto \tau+1/N$, $f$ must be constant.
\end{proof}

For vector-valued forms, each $\FD(\nu)$ can occur, as shown by the
following elementary construction, cf. \cite{verdier}.
For $k\in -\N_0$, let $m= -k$ and  let $\Cal P_m$ be the space of polynomials of degree at most $m$ in the variable $X$. The group $\SL_2(\R)$ acts of $\Cal P_m$
via $\left(\gamma:=
\left(\begin{smallmatrix}
a & b \\ c & d
\end{smallmatrix}\right)
\right)$

\begin{equation}\label{poly-rep}
\rho_m(\gamma)p(X) = (-c X+a)^{m} p\left(\frac{d X-b}{-c X+a}\right).
\end{equation}
We abbreviate $\rho:=\rho_m$. Following
\cite{verdier}, for an integer $r$ with $0\le r\le m$, define the function $\ue_{r,m-r}:\H \rightarrow  \Cal P_m$ by
\begin{align*}
\ue_{r,m-r}(\tau)(X) :&= \frac{(-1)^{m-r}}{r!}\,v^{r-m}\,\det\bpm X&\tau\\ 1 &1\epm^{r}\, \det\bpm X&\overline{\tau}\\ 1&1\epm^{m-r}\\
&= \frac{(-1)^{m-r}}{r!}\,v^{r-m}\,(X-\tau)^r \,(X-\overline{\tau})^{m-r}.
\end{align*}
Then, for any $\gamma\in \SL_2(\R)$,
\begin{equation}\label{any-Gamma}
\ue_{r,m-r}(\gamma\tau) = (c\tau+d)^{m-2r}\,\rho(\gamma)\,\ue_{r,m-r}(\tau),
\end{equation}
so that $\ue_{r,m-r}$ has weight $m-2r$.
The holomorphic function $\ue_{m,0}$ is a harmonic Maass form of weight $-m$
and type $\rho_m$. The corresponding functions $\te_{r,m-r}$ on $G$ are given by
\begin{align*}
	\te_{r,m-r}(g)(X) &= \frac{(-1)^{m-r}}{r!}\,j(g,i)^r \, j(g,-i)^{m-r}\, \det \bpm X&g(i)\\
	1&1\epm^{r}\,\det \bpm X&g(-i)\\
	1&1\epm^{m-r}.
\end{align*}

Let
$$
\phi(g) := \det\left( \bpm X\\1\epm\!, g\bpm \pm i\\1\epm\right)^r.
$$
If $A \in \g_0$, the real Lie algebra of $G$, then
$$
A \phi(g) = r\,\det\left( \bpm X\\
1\epm, g\bpm \pm i\\1\epm\right)^{r-1}\,\det\left( \bpm X\\
1\epm, A\bpm \pm i\\1\epm\right),
$$
and the same formula holds for $A\in \g$, the complexification of $\g_0$,  by linearity. In particular, since
$$
X^{+} \bpm i\\1\epm = -\bpm -i\\1\epm, \quad{\text{and}}\quad X^{+} \bpm -i\\1\epm = 0,
$$
we see that
\begin{align*}
X_+ \te_{r,m-r} = \te_{r-1,m-r+1},\qquad X_+ \te_{0,m}=  0, \quad \text{ and} \\
X_- \te_{r,m-r} =  (r+1)\,(m-r)\,\te_{r+1,m-r-1}, \qquad X_- \te_{m,0} =  0.
\end{align*}
The classical functions $\ue_{r,m-r}$ behave in the same way under raising and lowering as the $\te_{r,m-r}$ behave under $X^{\pm}$, viz
\begin{equation*}
L_{m-2r} \,\ue_{r,m-r} = (r+1)\,(m-r)\,\ue_{r+1,m-r-1},\qquad R_{m-2r} \,\ue_{r,m-r}= \ue_{r-1,m-r+1}.
\end{equation*}
In particular,
\begin{equation*}
L_{-m}\ue_{m,0}=0,\qquad R_{m}\ue_{0,m} =0.
\end{equation*}
These formulas are easily checked by a classical calculation as well.
In this way, we obtain a realization of $\FD(\nu)$ in the space  $A(G,\Cal P_m;\Gamma)$, where $m=\nu-1$ and for
any $\Gamma \subset \SL_2(\R)$; the transformation law under $\Gamma$ follows from (\ref{any-Gamma}).

Finally, we note that, under the flipping operator,
$$\mathfrak{F}_{-m}e_{m,0} = (-1)^m\,e_{m,0}.$$

\subsubsection{Case I(b)}
Any weakly holomorphic modular form $f$ of weight $k<1$ gives an extension of the form
$$0\lra \DS^+(\nu)\lra M\left(\tf\right) \lra \FD(\nu)\lra 0,$$
where $\nu=1-k$, as usual.  The submodule is generated by $R_k^{\nu}f$, which is again holomorphic by
either (\ref{autovan2})
or (\ref{trick}).

For example, for the form $f = 1/\Delta$ of weight $-12$, where $\Delta$ is the usual cusp form of weight $12$ for $\SL_2(\Z)$, 
we have $\nu=13$ and, recalling (\ref{trick}),
$$h=R^{13}_{-12} f = (2i)^{13}\,f^{(13)}$$
is a weakly holomorphic form of weight $14$.  Then the extension is 
$$0\lra M\left(\tilde h\right) \lra M\left(\tf\right) \lra \FD(13)\lra 0.$$

As an even simpler example,  we can take $f = j$, the modular $j$-invariant, with $k=0$ and $\nu=1$. Then we have $h:= R_0f= 2 i \,j'$ and we get an extension
$$0\lra M\left(\tilde h\right) \lra M\left(\tf\right) \lra \FD(1)\lra 0$$
with the trivial representation as quotient.

\subsubsection{Case I(c)}

Let $F\in M_k^{!}\backslash\{0\}$ and set $G:=\mathfrak{F}_kF$. If  $k=0$, we assume that $F$ is not a constant. 
We compute, using \eqref{xiflip},
\begin{align*}
	\xi_kG
	=-\frac{(-4\pi)^{1-k}}{(-k)!}D^{1-k}F\neq 0
\end{align*}
since $F\neq0$ (resp. $F$ is non-constant for $k=0$). Moreover, by \eqref{Dflip},
\begin{align*}
	D^{1-k}G
	=\frac{(-k)!}{(4\pi)^{1-k}}\xi_k F
	=0
\end{align*}
since $F\in M_k^!$. This gives the claim.

These last two cases, say for $\frac{1}{\Delta}$ and its flip $\mathfrak{F}_{-12}\frac{1}{\Delta}$, can be pictured as follows:  For case I(b)
\begin{equation}\label{5.6}
\xymatrix{
{}&{\bullet} \ar@/^/[r]^{R_{-12}}&{\circ}\ar@/^/[l]^{L_{-10}}
\ar@/^/[r]^{R_{-10}}&\ \dots\ \ar@/^/[l]^{L_{-8}}\ar@/^/[r]^{R_8}&
{\circ}\ar@/^/[r]^{R_{10}}\ar@/^/[l]^{L_{10}}&\ar@/^/[l]^{L_{12}}{\circledast}\ar@/^/[r]^{R_{12}}&
\odot\ar@/^/[r]^{R_{14}}&{\circ}\ar@/^/[r]^{R_{16}}\ar@/^/[l]^{L_{16}}&{\circ}\ar@/^/[l]^{L_{18}}\ar@/^/[r]^{R_{18}}&\cdots\ar@/^/[l]^{L_{20}}
},
\end{equation}
where $\bullet$ indicates $\frac{1}{\Delta}$, $\circledast$ indicates the form $R_{-12}^{12}\frac{1}{\Delta}$ of weight $12$,  and $\odot$ indicates the weight $14$ holomorphic form $h$. The omitted arrows are zero.

Applying the flip yields the example for case I(c): 
\begin{equation}\label{5.7}
\xymatrix{
\dots\ar@/^/[r]^{R_{-20}}&{\circ}\ar@/^/[r]^{R_{-18}}\ar@/^/[l]^{L_{-18}}&{\circ}\ar@/^/[r]^{R_{-16}}\ar@/^/[l]^{L_{-16}}&{\breve{\odot}}\ar@/^/[l]^{L_{-14}}&
{\breve\circledast} \ar@/^/[r]^{R_{-12}}\ar@/^/[l]^{L_{-12}} &{\circ}\ar@/^/[l]^{L_{-10}}
\ar@/^/[r]^{R_{-10}}&\ \dots\ \ar@/^/[l]^{L_{-8}}\ar@/^/[r]^{R_8}&
{\circ}\ar@/^/[r]^{R_{10}}\ar@/^/[l]^{L_{10}}&\ar@/^/[l]^{L_{12}}{\breve\bullet}
},
\end{equation}
where $\breve\bullet$ indicates the form   $v^{12}\overline{\frac{1}{\Delta}}$ of weight $12$,
$\breve{\circledast}$ indicates the harmonic form $v^{-12}\overline{R_{-12}^{12}\frac{1}{\Delta}} = \mathfrak{F}_{-12}\frac{1}{\Delta}$,  and $\breve{\odot}$ indicates the
weight $-14$ anti-holomorphic form $v^{14}\bar h$. Again, the omitted arrows are zero.

Here is should be noted that the location of the harmonic Maass form in the diagram changes under the flip, from $\bullet$ in (\ref{5.6})
to $\breve\circledast$ in (\ref{5.7}).  This is due to the harmonicity requirement, i.e.,
that the harmonic Maass form is annihilated by the composition $R_{r-2}\circ L_k$.

\subsubsection{Case I(d)}

Here we want to realize a copy of the $(\g,K)$-module $I(\nu)$ for $\nu=1-k$ and $k<1$ in the space of automorphic forms.

The simplest example is given by adding a weakly holomorphic form and its flip. To be more precise, let $F\in M_k^!\backslash\{0\}$ and set $G:=F+\mathfrak{F}_k(F)$.
Then, by \eqref{xiflip} and the fact that $F\in M_k^!$,
\begin{align*}
	\xi_kG
	=-\frac{(-4\pi)^{1-k}}{(-k)!}D^{1-k}F\neq 0.
\end{align*}
Moreover, using \eqref{Dflip} yields
\begin{align*}
	D^{1-k}G=D^{1-k}F\neq0.
\end{align*}

A second example can be constructed using Eisenstein series and we refer to \cite{kudla.yang.eis} as a convenient reference.
As in Section 3 of loc.cit., for $r\in2\Z$, we let
\begin{equation}\label{def-Eis}
E_{r,s}(\tau) = \sum_{\gamma=\left(\begin{smallmatrix}a&b\\c&d\end{smallmatrix}\right)\in \Gamma_{\infty}\backslash \SL_2(\Z)}
(c\tau+d)^{-r} \, \operatorname{Im}(\gamma\tau)^{\frac12(s+1-r)}.
\end{equation}
This series is absolutely convergent for ${\rm Re}(s)>1$ and defines an automorphic form of weight $r$.
Its Fourier series is given by Proposition~3.1 of loc.cit.: 
\begin{align*}
E_{r,s}(\tau) &= v^\beta + v^{\beta-s}\,2\pi i^r \frac{2^{-s}\Gamma(s)}{\Gamma(\alpha)\Gamma(\beta)}\,\frac{\zeta(s)}{\zeta(s+1)} \\
&\quad+\frac{i^r (2\pi)^{s+1}\,v^\beta}{\Gamma(\a)\,\zeta(s+1)} \sum_{m=1}^\infty \sigma_s(m)\, \Psi(\beta,s+1;4\pi m v)\,q^m \\
&\quad+ \frac{i^r (2\pi)^{s+1}\,v^\beta}{\Gamma(\beta)\,\zeta(s+1)}\sum_{m=1}^\infty \sigma_s(m)\, \Psi(\alpha,s+1;4\pi m v)\,\overline{q^m},
\end{align*}
where $\Gamma(s)$ is the usual Gamma function, $\zeta(s)$ the Riemann zeta-function,
\[
\alpha:= \frac12(s+1+r),\qquad
\beta:=\frac12(s+1-r),\qquad
\sigma_s(m):= \sum_{d\mid m} d^s,
\]
and $\Psi(a,b;z)$ is the confluent hypergeometric function, given by
$$\Psi(a,b;z) := \frac{1}{\Gamma(a)}\int_0^\infty e^{-zt}\,(1+t)^{b-a-1}\,t^{a-1}\,dt.$$
Note that we take $\Psi(0,b;z)=1$ (cf. \cite{kudla.yang.eis}, the equation after (2.20)).

Recall that the principal series representation\footnote{Here we write $s$ for $\nu$
and take $\varepsilon=0$.}
$I(s)$ is spanned by the functions $\phi_r(s)$, defined in (\ref{ind-K-type}). For ${\rm Re}(s)>1$,
we obtain a linear map
\begin{equation*}
\widetilde{E(s)}: I(s) \lra A(G;\Gamma), \qquad \phi_r \mapsto \widetilde{E_{r,s}},
\end{equation*}
and this map is $(\g,K)$-intertwining. Thus, by (\ref{PS.action}),
\begin{align}
L_rE_{r,s} &= \frac12(s+1-r) E_{r-2,s},\label{lower-Er} \quad \text{and}\\
R_rE_{r,s} &= \frac12(s+1+r)\,E_{r+2,s}.\label{raise-Er}
\end{align}
In particular, set $\ell=-k$ with $k<0$
and let $s_0 = 1+\ell=\nu$. Then we obtain a $(\g,K)$-intertwining map
$$\widetilde{E_{s_0}}: I(s_0) \lra A(G;\Gamma), \qquad \phi_r \mapsto \widetilde{E_{r,s_0}}.$$
In fact, this map is injective. To see this, note that the constant term of $E_{r,s_0}$ equals
$$v^{1+\frac12(\ell-r)} + v^{-\frac12(\ell+r)}\,
\frac{2\pi i^r\,2^{-\ell-1}\Gamma(\ell+1)}{\Gamma\left(1+\frac12(\ell+r)\right)\Gamma\left(1+\frac12(\ell-r)\right)}\,\frac{\zeta(\ell+1)}{\zeta(\ell+2)},$$
where, if $|r|\ge \ell+2$, the second term is zero due to the pole in the denominator.
Thus $E_{r,s_0}\ne 0$ and these functions are linearly independent since they have distinct weights.
The function
$$
f(\tau) := E_{k,1-k}(\tau)
$$
is the harmonic Maass form of weight $k$ for $\SL_2(\Z)$. Its Fourier expansion is given by
\begin{align*}
E_{\ell+1,-\ell}(\tau) &= v^{\ell+1} + 2\pi\, i^{\ell} \,
2^{-\ell-1}\,\frac{\zeta(\ell+1)}{\zeta(\ell+2)}\\
\nass
{}&\qquad\qquad + \frac{i^{\ell} (2\pi)^{\ell+2}\,v^{\ell+1}}{\zeta(\ell+2)}\sum_{m=1}^\infty \sigma_{\ell+1}(m)\, \Psi(\ell+1,\ell+2;4\pi m v)\,q^m\\
\nass
{}&\qquad\qquad + \frac{i^{\ell} (2\pi)^{\ell+2}\,v^{\ell+1}}{\Gamma(\ell+1)\,\zeta(\ell+2)}\sum_{m=1}^\infty \sigma_{\ell+1}(m)\,
\Psi(1,\ell+2;4\pi m v)\,\overline{q^m}.
\end{align*}
But we have
\begin{align*}
\Psi(\ell+1,\ell+2;z) &= \frac{1}{\Gamma(\ell+1)}\int_0^\infty e^{-z t}\,t^\ell\,dt = z^{-\ell-1},\qquad\text{and}\\
\Psi(1,\ell+2;z) &= e^{z}\,\int_1^\infty e^{-zt} \,t^\ell\,dt =z^{-\ell-1}\,e^{z}\,\int_z^\infty e^{-t} \,t^\ell\,dt  = z^{-\ell-1}\,e^{z}\,\Gamma(\ell+1,z).
\end{align*}
Thus
\begin{align*}
E_{\ell+1,-\ell}(\tau) &= v^{\ell+1} +2\pi\,
2^{-\ell-1}\, i^{\ell} \,\frac{\zeta(\ell+1)}{\zeta(\ell+2)}
+\frac{2\pi \,2^{-\ell-1}\,i^{\ell}  }{\zeta(\ell+2)}\sum_{m=1}^\infty \sigma_{-\ell-1}(m)\, q^m\\
\nass
{}&\qquad\qquad + \frac{2\pi \,2^{-\ell-1}\,i^{\ell} }{\Gamma(\ell+1)\,\zeta(\ell+2)}\sum_{m=1}^\infty \sigma_{-\ell-1}(m)\,
\,\beta_{-\ell}(4\pi m v)\,q^{-m},
\end{align*}
as in (\ref{weak-Fourier}).
The image of this series under $\xi_k= \xi_{-\ell}$ is $1-k$ times the holomorphic Eisenstein series $E_{2-k}(\tau)$ of weight $2-k$, while its image under $R_k^{1-k}$
is a non-zero multiple of $E_{2-k}(\tau)$. \\
We omit the case $k=0$.\\

\begin{remark}
One could simply define, for $k\in -2\N$,
\begin{align*}
\mathcal{Q}_k(\tau)
:=\sum_{\gamma\in\Gamma_\infty\backslash \SL_2(\Z)}v^{1-k}\Big|_k\gamma.
\end{align*}
Then $\mathcal{Q}_k\in H_k^\text{mg}$ and
\begin{align*}
\xi_k\left(\mathcal{Q}_k\right)
&=(1-k)E_{2-k},\\
D^{1-k}\left(\mathcal{Q}_k\right)
&=-(4\pi)^{1-k}(1-k)!E_{2-k}.
\end{align*}
However, we included the description including an $s$-parameter as this adds an extra perspective described in Section~\ref{section.related}.
\end{remark}

\subsection{Case II: $k=1$}

\subsubsection{Case II(a)}
 Any weakly holomorphic modular form $\uf $ of weight $1$ gives an example.\\
\subsubsection{Case II(b)}
 In this case we want to realize the extension of $(\g,K)$-modules
\begin{equation}\label{tiny.ext}
0\lra \LDS^-(0) \lra M\left(\tf\right) \lra \LDS^+(0)\lra 0,
\end{equation}
which, in classical language, is equivalent to finding
a harmonic Maass form $\uf $ of weight $1$ whose image under $\xi_1$ is
a holomorphic modular form of weight $1$.

An example for this case is given by the derivatives of incoherent Eisenstein series of weight $1$, constructed in \cite{kry.tiny}.
To describe this, we fix an imaginary quadratic field 
$\kay$ of prime discriminant $-D$ with $D\equiv 3\pmod{4}$ and $D>3$. Then define the pair of weight $1$ Eisenstein series by
\begin{align*}
E^{\pm}_s(\tau): &= v^{\frac{s}2}\sum_{\gamma=\left(\begin{smallmatrix}
		a & b\\
		c & d
	\end{smallmatrix}\right)\in \Gamma_\infty\backslash \SL_2(\Z)} \P_D^{\pm}(\gamma)(c\tau+d)^{-1}|c\tau+d|^{-s},\\  
\noalign{\noindent where}
\P_D^{\pm}(\gamma)
:&=\begin{cases}
	\chi_D(a)&\text{if $D\mid c$,}\\
	\pm i D^{-\frac12}\,\chi_D(c)&\text{if $(c,D)=1$,}
\end{cases}
\end{align*}
with $\chi_D $ the quadratic character $D$ associated to $\kay$. These series are absolutely convergent for ${\rm Re}(s)>1$ and
have a meromorphic continuation to the whole $s$-plane. Let $L(s,\chi_D)$ be the usual Dirichlet L-series associated to $\chi_D$.
Then, the analytic continuations in $s$ of the normalized series  
$$
\widehat{E}^{\pm}_s(\tau) := \left(\frac{D}{\pi}\right)^{\frac{s+1}2}\Gamma\left(\frac{s+1}2\right)\,L(s,\chi_D)\,E^{\pm}_s(\tau)
$$
satisfies the functional equation
$$
\widehat{E}^{\pm}_{-s}= \pm \widehat{E}^{\pm}_s.
$$
We refer to $\widehat{E}^{+}_s$ (resp. $\widehat{E}^{-}_s$) as the {\it coherent} (resp. {\it incoherent}) Eisenstein series associated to $\kay$ (see \cite{kry.tiny}).
For the coherent Eisenstein series $\widehat{E}^{+}_s$, we have
$$
\frac12\, \widehat{E}^{+}_0(\tau) = h_{\kay} + 2 \sum_{n=1}^\infty \rho(n)\,q^n = \sum_{\frak a} \vartheta_{\frak{a}}(\tau),
$$
where the sum runs over representatives $\frak a$ for the ideal classes,
$h_{\kay}$ is the class number of $\kay$, and $\rho(n)$ is the number of integral ideals of norm $n$.
Here
$$
\vartheta_{\frak{a}}(\tau):= \sum_{n\in \frak a} q^{\frac{N(n)}{N(\frak a)}}
$$
is Hecke's weight $1$ theta series for the ideal class of the fractional ideal $\frak a$.
Due to the functional equation, the incoherent series $\widehat{E}^-_s$ vanishes at $s=0$, so we instead consider the
function
$$
\phi(\tau) := \frac12\,\frac{\d}{\d s}\left[ \widehat{E}^-_{s}(\tau)\right]_{s=0} =: a_v(0) + \sum_{n=-\infty}^1 a_v(n) \,q^{n} + \sum_{n=1}^\infty a(n)\, q^n.
$$
According\footnote{Note that we have changed the sign compared to \cite{kry.tiny}.} to  Theorem 1 of \cite{kry.tiny}, we have
$$
a_v(n)
=\begin{cases}
	-2\log(D)\,(\ord_D(n)+1)\rho(n) - 2\sum_{p\ne D} \log(p)\,(\ord_p(n)+1)\,\rho\left(\frac{n}{p}\right)&\text{for $n>0$},\\
	h_{\kay}\, \left(\log(D)  + \frac12\frac{\Lambda'(1,\chi_D)}{\Lambda(1,\chi_D)} +\log(v)\right) &\text{for $n=0$},\\
	-2\,\rho(-n)\,\beta_1(4\pi |n|v)&\text{for $n<0$},
\end{cases}
$$
where $\Lambda(s,\chi_D) := \pi^{-\frac12(s+1)}\,\Gamma(\frac12(s+1))\,L(s,\chi_D)$,
and $\beta_1(x)=\Gamma(0,x)$ is the incomplete $\Gamma$-function. 
Here we slightly abuse notation and write $a_v(n) = a(n)$ if $n>0$.

Applying the $\xi$-operator, we obtain
$$
\xi_1\phi= \frac12\, \widehat{E}_0^+.
$$
Thus, taking the function $\uf (\tau) = \phi(\tau)$ on $\H$ with corresponding function $\tf(g) = j(g,i)^{-1}\,\phi(g(i))$ on $G$, we obtain the desired extension (\ref{tiny.ext})
of $(\g,K)$ modules,
where the submodule $\LDS^-(0)$ is generated by the function
$$
j(g,i)\,\frac12\, \overline{\widehat{E}_0^+\left(g(i)\right)}.
$$

\subsection{Case III: $k>1$}
\subsubsection{Case III(a)}   The function $\tf$ on $G$ corresponding to any holomorphic cusp form $\uf $ of weight $k$ generates a copy of $\DS^+(\nu)$ and hence provides an example
for this case. \\
\subsubsection{Case III(b)} For $k=2$, let
\begin{equation}\label{E2star}
\uf (\tau) = E_2^\ast(\tau) := 1 - 24\sum_{n=1}^\infty \sum_{d\vert n}d\,q^n-\frac{3}{\pi v} ,
\end{equation}
be the classical non-holomorphic Eisenstein series of weight $2$. Then
$L_2E^*_2 = \frac{3}{\pi}$ and the
$(\frak g,K)$-module $M(\tf)$ generated by the corresponding $\tf$ gives an extension
$$0\lra \FD(1) \lra M\left(\tf\right) \lra \DS^+(1)\lra 0,$$
with the trivial representation $\FD(1)$ as a submodule and the holomorphic discrete series $\DS^+(1)$ of weight $2$ as quotient.
This example was discussed in connection with the theory of nearly holomorphic modular forms in \cite{PSS}.\\

In general, in case III(b), we want an extension
$$0\lra \FD(\nu) \lra M\left(\tf\right) \lra \DS^+(\nu)\lra 0,$$
and thus, in classical language, a harmonic Maass form $\uf $ of weight $k = \nu+1>2$ such that $\xi_k\uf\ne 0$ but $R_{2-k}^{k-1} \xi_k\uf =0$, so that
$\xi_k\uf$ generates the ``finite dimensional'' piece.
This cannot happen for scalar-valued forms of weight $k>2$. Indeed, the form $h=\xi_k\uf $ is a weakly holomorphic form of weight $2-k <0$.  
As in the proof of Lemma~\ref{lemma6.5}, the vanishing of
$R_{2-k}^\nu h$ implies that $h$ is a constant and hence $h=0$ if $k>2$.
Thus, in the scalar-valued case,  only case III(b) with $\nu=1$, i.e., the $E_2^*$ example, can occur.

However, in the vector-valued case, we can construct more examples as follows.
As in the introduction, define a polynomial-valued function 
\begin{equation}\label{Ek}
	E^*_{m+2}(\tau):= \sum_{r=0}^m \frac{1}{r+1} {m\choose r}\,\ue_{r,m-r}(\tau)\,R^r\,E^*_2(\tau),
\end{equation}
a linear combination of products of the polynomials $e_{r,m-r}(\tau)$ and the $R^r\,E^*_2(\tau)$'s.
Here and in the following, we often omit the subscript on the raising and lowering operators to lighten the notation.
\begin{proposition}\label{prop-of-Ek}
	The polynomial-valued function $E^*_{m+2}$ satisfies
	$$
	E^*_{m+2}(\gamma\tau) = j(\gamma,\tau)^{m+2}\,\rho_m(\gamma)\,E^*_{m+2}(\tau).
	$$
	Moreover
	$$
	L_{m+2}\,E^*_{m+2} = \frac{3}{\pi} \,\ue_{0,m}.
	$$
\end{proposition}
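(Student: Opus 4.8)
The plan is to verify the two claimed properties of $E^*_{m+2}$ separately, exploiting the building blocks already established in the excerpt. For the transformation law, I would argue that $E^*_{m+2}$ is manufactured so that the weights match up perfectly. Recall from (\ref{any-Gamma}) that $\ue_{r,m-r}$ transforms with automorphy factor $j(\gamma,\tau)^{m-2r}\rho_m(\gamma)$. Meanwhile, $R^r E^*_2$ is the $r$-fold raising of the weight $2$ form $E_2^*$, so by the standard equivariance of the iterated Maass raising operator it is a (non-holomorphic) modular form of weight $2+2r$, transforming by the scalar factor $j(\gamma,\tau)^{2+2r}$. Multiplying the two, the exponents of $j(\gamma,\tau)$ add: $(m-2r)+(2+2r)=m+2$, independently of $r$. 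Hence each summand transforms by $j(\gamma,\tau)^{m+2}\rho_m(\gamma)$, and since these combine linearly the whole sum does too. The binomial coefficients and the $\frac{1}{r+1}$ normalization play no role here; they are only needed for the lowering identity.

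For the lowering identity, the key is to apply $L_{m+2}$ to each product and use the product rule together with the known action of $L$ on the two factors. I would use the formula $L_{m-2r}\,\ue_{r,m-r}=(r+1)(m-r)\,\ue_{r+1,m-r-1}$ recorded in the excerpt, and the fact that $L$ and $R$ are essentially inverse on $E_2^*$ up to scalars. The crucial structural input is that $E_2^*$ is harmonic of weight $2$ with $L_2 E_2^* = \frac{3}{\pi}$; iterating, one gets $L\,R^r E_2^* = c_r\,R^{r-1}E_2^*$ for an explicit constant $c_r$ arising from the commutation relation between $R$ and $L$ (equivalently from $\Delta_k=-R\circ L$ plus harmonicity). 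Writing $E^*_{m+2}=\sum_r a_r\,\ue_{r,m-r}\,R^r E_2^*$ with $a_r=\frac{1}{r+1}\binom{m}{r}$, the Leibniz rule for $L$ on a product of forms of weights $m-2r$ and $2+2r$ produces two contributions: one from lowering $\ue_{r,m-r}$ (raising the first index to $r+1$) and one from lowering $R^r E_2^*$ (dropping to $R^{r-1}E_2^*$). After reindexing, these two families of terms must telescope, leaving only the $r=0$ boundary term $\frac{3}{\pi}\,\ue_{0,m}$.

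The main obstacle, and the only genuinely delicate point, is getting the two constants in the telescoping to cancel exactly, which is precisely what forces the specific coefficients $\frac{1}{r+1}\binom{m}{r}$. Concretely, the term obtained by lowering $\ue_{r,m-r}$ in the $r$-th summand has weight-$(m-2r)$ coefficient $(r+1)(m-r)$ and lands on $\ue_{r+1,m-r-1}\,R^r E_2^*$; the term obtained by lowering $R^{r+1}E_2^*$ in the $(r{+}1)$-st summand lands on the same $\ue_{r+1,m-r-1}\,R^r E_2^*$ with a coefficient coming from $L\,R^{r+1}E_2^* = c_{r+1}\,R^r E_2^*$. For these to cancel one needs
\begin{equation*}
a_r\,(r+1)(m-r) + a_{r+1}\,c_{r+1}=0,
\end{equation*}
and verifying that $a_r=\frac{1}{r+1}\binom{m}{r}$ together with the explicit $c_{r+1}$ satisfies this recursion is the computational heart of the argument. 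I would also need to handle the boundary carefully: the lowering of $\ue_{0,m}$ contributes nothing useful to the telescoping (it feeds the $r=1$ cancellation), while the very first factor $R^0 E_2^*=E_2^*$ produces, via $L_2 E_2^*=\frac{3}{\pi}$, exactly the surviving term $\frac{3}{\pi}\ue_{0,m}$ with coefficient $a_0=1$. Once the recursion on the $c_r$ is pinned down (most cleanly by computing $L\,R$ on $E_2^*$ using $C$ and (\ref{casimir}) on the lift, or equivalently using the scalar action in the principal series), the cancellation is mechanical and the stated identity follows.
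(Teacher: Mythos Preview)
Your proposal is correct and follows essentially the same route as the paper: both prove the transformation law by observing that the weights $(m-2r)$ and $(2+2r)$ add to $m+2$ termwise, and both prove the lowering identity by applying the Leibniz rule, using $L\,e_{r,m-r}=(r+1)(m-r)\,e_{r+1,m-r-1}$ together with the formula $L\,R^rE_2^*=-r(r+1)\,R^{r-1}E_2^*$ (your $c_r$), and then verifying the telescoping $a_r(r+1)(m-r)+a_{r+1}c_{r+1}=0$. The only cosmetic difference is that the paper derives $c_r=-r(r+1)$ by iterating the commutation relation $R_{k-2}L_k=L_{k+2}R_k+k$, whereas you propose to read it off from the Casimir relation (\ref{movedown}) on the lift; these are equivalent one-line computations.
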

\begin{proof}
	For the transformation law, we have
	\begin{align*}
		E^*_{m+2}(\gamma\tau) &= \sum_{r=0}^m \frac{1}{r+1} {m\choose r}\,\ue_{r,m-r}(\gamma\tau)\,R^r\,E^*_2(\gamma\tau)\\
		\noalign{\smallskip}
		{}&=\sum_{r=0}^m \frac{1}{r+1} {m\choose r}\,j(\gamma,\tau)^{m-2r}\rho_m(\gamma)\ue_{r,m-r}(\tau)\,j(\gamma,\tau)^{2+2r}\,R^r\,E^*_2(\tau)\\
		\noalign{\smallskip}
		{}&=j(\gamma,\tau)^{m+2}\,\rho_m(\gamma)\,E^*_{m+2}(\tau),
\end{align*}
	as claimed.
	
	Next, applying the lowering operator and using the fact that
	$$
	R_{k-2}L_k = L_{k+2}R_k+k,
	$$
	we have
	$$
	L\,R^r \,E^*_2 = -2r R^{r-1}\,E^*_2 + RLR^{r-1}\,E^*_2,
	$$
	and hence
	$$
	LR^r\,E_2^* = -r(r+1) R^{r-1}\,E_2^*.
	$$
	Then, for $r\in\N$,
	\[
	L_{m+2}\big(\,\ue_{r,m-r}\,R^r\,E^*_2\,\big)= (r+1)(m-r)\,\ue_{r+1,m-r-1} \,R^r\,E^*_2
	- r(r+1)\,\ue_{r,m-r}\,R^{r-1}\,E^*_2,
	\]
	while,
	$$
	L_{m+2}\big(\,\ue_{0,m}\,E^*_2\,\big)  =  m \,\ue_{1,m-1}\,E^*_2 + \frac{3}{\pi}\,\ue_{0,m}.
	$$
	For $r\in\N_0$, the coefficient of $\ue_{r+1,m-r-1}(\tau) \,R^r\,E^*_2(\tau)$ in $L_{m+2}(\,\ue_{r,m-r}(\tau)\,R^r\,E^*_2(\tau)\,)$ equals
\begin{align*}
\frac{1}{r+1} {m\choose r}(r+1)(m-r) - \frac{1}{r+2} {m\choose r+1}(r+1)(r+2).
\end{align*}
This vanishes for $0\le r<m$ and the claimed lowering identity follows.
\end{proof}
Here is the picture of the corresponding $(\g,K)$-module:
$$
\xymatrix{
{}&{\scriptstyle\ominus} \ar@/^/[r]^{R_{2-k}}&{\circ}\ar@/^/[l]^{L_{4-k}}
\ar@/^/[r]^{R_{4-k}}&\ \cdots\ \ar@/^/[l]^{L_{6-k}}\ar@/^/[r]^{R_{k-6}}&
\ar@/^/[l]^{L_{k-4}}{\circ}\ar@/^/[r]^{R_{k-4}}&
{\scriptstyle\oplus}\ar@/^/[l]^{L_{k-2}}&{\bullet}\ar@/^/[r]^{R_{k}}\ar@/^/[l]^{L_{k}}&{\circ}\ar@/^/[l]^{L_{k+2}}\ar@/^/[r]^{R_{k+2}}&\cdots\ar@/^/[l]^{L_{k+4}}
}
$$
where $\bullet$ indicates $E^*_{k}$, ${\scriptstyle\oplus}$ indicates $e_{0,m}$ and ${\scriptstyle\ominus}$ indicates $e_{m,0}$.
Note that in all of these pictures there are unspecified, but non-zero, transition constants.

\subsubsection{Case III (c)}
A natural construction of positive weight harmonic Maass forms goes through sesquiharmonic Maass forms. They satisfy condition (1) and (3) 
of harmonic Maass forms but condition (2) is replaced by
\begin{equation}\label{Dk2}
\Delta_{k,2}f=0\qquad\text{with}\qquad \Delta_{k,2}:= -\xi_k\circ\xi_{2-k}\circ\xi_k.
\end{equation}
Let $H_{k,2}^{\rm{mg}}$ be the space of sesqui-harmonic Maass forms. A way to construct sesquiharmonic forms is to differentiate, with 
respect to an additional parameter as we do for weight $1$ in Subsection 5.2.2. In weight $3/2$ this has been done by Duke, 
Imamoglu, and Toth \cite{DIT} in the context of finding a preimage under the $\xi$-operator of the Hirzebruch-Zagier Eisenstein series. Let
\[
H_{k,2}^{\sharp}:=\left\{f\in H_{k,2}:\xi_k(f)\in H_{2-k}^{\sharp}\right\}.
\]
In \cite{BDR} it was shown that the map
\[
\xi_k: H_{k, 2}^{\sharp}\to H_{2-k}^{\sharp}
\]
is surjective. Note that a similar construction can be carried out for all forms mapping to $H_{2-k}$, as we show below.
We use Poincar\'e series and differentiate with respect to an extra $s$-parameter.
To be more precise let for $m\in\Z\setminus\{0\}$
\begin{equation*}
	\mathbb{F}_{k,m}(\tau) := \mathbb{P}_k\left(\psi_{k,m}\right)
\end{equation*}
with
\begin{equation*}
	\psi_{k,m}(\tau):=\left[\frac{\partial}{\partial s}\mathcal{M}_{k,s}(4\pi mv)\right]_{s=\frac{k}{2}}e(mu),
\end{equation*}
where
\[
\mathcal{M}_{k, s}(w):=|w|^{-\frac{k}{2}} M_{\sgn(w)\frac{k}{2}, s-\frac12}(|w|).
\]
Then $\mathbb{F}_{k,m}\in H_{k,2}$ and, with same constant $c_{k, m}\neq 0$,
\begin{equation*}
	\xi_k\mathbb{F}_{k,m}=c_{k, m} F_{2-k,-m}.
\end{equation*}
Now consider in particular
\begin{equation*}
	f := \mathbb{F}_{4,1}.
\end{equation*}
Then
\begin{equation*}
	\xi_4f = c_{4, 1}F_{-2,-1} \in H_{-2}.
\end{equation*}
However, since the space of dual weight, $S_4 =\{0\}$, $\xi_4f \in M^{!}_{-2}$, and thus $f\in H^{\mathrm{mg}}_4$.

\section{Some related examples}\label{section.related}

It is natural
to consider automorphic forms that are not harmonic but are annihilated  by a power of the weight $k$ Laplacian, i.e., with condition (2)
replaced by
\begin{equation}\label{higher-power-harmonic}
\Delta_k^\ell f=0,
\end{equation}
for some $\ell\in\N$.
In particular for harmonic Maass forms we have $\ell=1$ and for sesqui-harmonic forms $\ell=2$. Now, if the minimal power of $\Delta_k$ annihilating $f$ is greater than $1$,  the relations (\ref{multi-one-1}) and (\ref{multi-one-2}) need not hold and, as a result, the $K$-types in $M(\tf)$ can occur with higher multiplicity.
Much more complicated $(\g,K)$-modules can arise.  Here we give a couple of examples, leaving a more systematic analysis to
another occasion.

First, we consider the Eisenstein series of weight $0$ given by (\ref{def-Eis}) with $r=0$.
It has a simple pole at $s=1$ and Laurent expansion
$$2\,\zeta(s+1)\,E_{r,0}(\tau) = \frac{2\pi}{s-1}+ 2\pi\,\big(\,\gamma-\log(2))+\frac{\pi}{2}\,\phi(\tau) + O(s-1),$$
where, by the Kronecker limit formula \cite{lang,siegel}, is as defined in \eqref{Kronecker-fun}.
An easy calculation shows that
$$L_0\phi(\tau) = \frac{\pi}3\,v^2\, \overline{E_2^*(\tau)},\quad\text{and}\quad
R_0\phi(\tau) = \frac{\pi}3\,E_2^*(\tau).$$  
Thus
$$\Delta_0\phi = L_2R_0\phi= 1,$$
so that $\Delta_0^2\phi=0$.  Actually, since $\Delta_0\phi$ is already annihilated by
$R_0$, $\phi$ is annihilated by $\Delta_{0,2}$ as defined in \eqref{Dk2} and thus is sesquiharmonic.
The $(\g,K)$-module generated by $\widetilde{\phi}$ contains the trivial representation of $K$ with multiplicity $2$ and all other even weights with
multiplicity $1$.
This $(\g,K)$-module is pictured in Figure 1 at the end of the introduction,
where $\bullet$ indicates the function $\phi$, $\odot$ indicates the constant function, and the omitted arrows $R_0$ and $L_0$ originating from $\odot$ are zero.
This $(\g,K)$-module has a socle filtration with the trivial representation as maximal semi-simple submodule, the direct sum of the weight $2$ holomorphic and anti-holomorphic discrete series
as intermediate subquotient, and the trivial representation as the unique irreducible quotient.

A little more generally, consider the Laurent expansion of the Eisenstein series (\ref{def-Eis}) at any point $s_0$ with ${\rm Re}(s_0)>1$,
$$E_{\ell,s}(\tau) =\sum_r A_{r,\ell,s_0}(\tau)\,(s-s_0)^r .$$
Note that for $r<0$, $A_{r,\ell,s_0}(\tau)=0$ for all $\ell$, since $E_{\ell,s}(\tau)$ has no pole at $\tau=s_0$ in the half-plane ${\rm Re}(s_0)>1$.
Let $\mathcal E_r(s_0)$ be the span of the functions $A_{r,\ell,s_0}$ for $t\le r$ and $\ell\in 2\Z$ and let $\widetilde{\mathcal E_r(s_0)}$
be the span of their lifts $\widetilde{A_{r,\ell,s_0}}$ to $G$ via (\ref{lifttoG}).

\begin{proposition}\label{surlinmap}
The surjective linear map defined by
\begin{equation*}
\psi_r(s_0): I(s_0) \lra \widetilde{\mathcal E_r(s_0)}\bigg/ \widetilde{\mathcal E_{r-1}(s_0)},\qquad \phi_\ell \mapsto \widetilde{A_{r,\ell,s_0}}
\end{equation*}
is equivariant for the action of $(\g,K)$.
Moreover, this map is an isomorphism.
\end{proposition}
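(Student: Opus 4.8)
The plan is to realize $\psi_r(s_0)$ as the $r$-th Taylor coefficient at $s_0$ of the holomorphic family of intertwining maps $\widetilde{E(s)}\colon I(s)\to A(G;\Gamma)$, $\phi_\ell\mapsto\widetilde{E_{\ell,s}}$, and to extract the module structure by matching powers of $s-s_0$. First I would recall that, as functions of $s$, the Eisenstein series $E_{\ell,s}$ are holomorphic at $s_0$ (so that $A_{r,\ell,s_0}=0$ for $r<0$), and that (\ref{lower-Er}) and (\ref{raise-Er}) give $X_\pm\widetilde{E_{\ell,s}}=\tfrac12(s+1\pm\ell)\widetilde{E_{\ell\pm2,s}}$ for every $s$. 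Since $X_\pm$ act on functions on $G$ independently of $s$, substituting $\widetilde{E_{\ell,s}}=\sum_r\widetilde{A_{r,\ell,s_0}}(s-s_0)^r$ and comparing coefficients of $(s-s_0)^r$ yields the triangular relation
$$X_\pm\,\widetilde{A_{r,\ell,s_0}}=\tfrac12(s_0+1\pm\ell)\,\widetilde{A_{r,\ell\pm2,s_0}}+\tfrac12\,\widetilde{A_{r-1,\ell\pm2,s_0}}.$$
Likewise, because $E_{\ell,s}$ has weight $\ell$ for all $s$, we get $H\,\widetilde{A_{r,\ell,s_0}}=\ell\,\widetilde{A_{r,\ell,s_0}}$ exactly.

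Next I would pass to the quotient $\widetilde{\mathcal E_r(s_0)}/\widetilde{\mathcal E_{r-1}(s_0)}$. Here the term $\widetilde{A_{r-1,\ell\pm2,s_0}}$ lies in $\widetilde{\mathcal E_{r-1}(s_0)}$ and hence dies, so the displayed relation reduces modulo $\widetilde{\mathcal E_{r-1}(s_0)}$ to $X_\pm\widetilde{A_{r,\ell,s_0}}\equiv\tfrac12(s_0+1\pm\ell)\widetilde{A_{r,\ell\pm2,s_0}}$, which is exactly the principal series action (\ref{PS.action}) of $X_\pm$ on $\phi_\ell$ in $I(s_0)$. Together with the $H$-eigenvalue, and since $X_+,X_-,H$ generate $\mathfrak g$, this shows that $\psi_r(s_0)$ is $(\mathfrak g,K)$-equivariant. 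Surjectivity is immediate from the definition, since $\widetilde{\mathcal E_r(s_0)}$ is spanned by the $\widetilde{A_{t,\ell,s_0}}$ with $t\le r$, so its image in the quotient is spanned by the $\widetilde{A_{r,\ell,s_0}}=\psi_r(s_0)(\phi_\ell)$, and the $\phi_\ell$ form a basis of $I(s_0)$.

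It remains to prove injectivity, which I expect to be the main obstacle. The kernel is a $(\mathfrak g,K)$-submodule of $I(s_0)$, but $I(s_0)$ may be reducible (namely when $s_0$ is an odd integer $>1$), so irreducibility alone does not suffice. Instead I would argue $K$-type by $K$-type: since $\widetilde{\mathcal E_{r-1}(s_0)}$ is $K$-stable with $\ell$-weight space spanned by $\{\widetilde{A_{t,\ell,s_0}}:t\le r-1\}$, and each $K$-type of $I(s_0)$ is one-dimensional, injectivity reduces to showing, for each even $\ell$, that $A_{r,\ell,s_0}$ is not a linear combination of the $A_{t,\ell,s_0}$ with $t<r$. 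For this I would examine the constant term of the Fourier expansion, whose leading piece $v^{(s+1-\ell)/2}$ contributes $\tfrac1{r!}(\tfrac12)^r(\log v)^r v^{(s_0+1-\ell)/2}$ to $A_{r,\ell,s_0}=\tfrac1{r!}\partial_s^r E_{\ell,s}|_{s_0}$. The only other constant-term exponent is $v^{(1-\ell-s_0)/2}$, which differs from $v^{(s_0+1-\ell)/2}$ since $s_0\ne0$; hence no cancellation occurs and the coefficient of $(\log v)^r v^{(s_0+1-\ell)/2}$ is genuinely nonzero. Since every $A_{t,\ell,s_0}$ with $t<r$ has a constant term involving $\log v$ only to order $t<r$, the term $A_{r,\ell,s_0}$ cannot lie in their span. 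Thus no $\phi_\ell$ is in the kernel; as the kernel is a sum of these one-dimensional weight spaces, it is zero, and $\psi_r(s_0)$ is an isomorphism. The delicate point throughout is the non-vanishing of this leading $(\log v)^r$ coefficient, which is where the hypothesis ${\rm Re}(s_0)>1$ enters.
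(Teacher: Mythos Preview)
Your proof is correct, and the equivariance step is essentially the same as the paper's: both Taylor-expand the relations \eqref{lower-Er}--\eqref{raise-Er} to get the triangular formulas for $X_\pm$ acting on the $\widetilde{A_{r,\ell,s_0}}$, which modulo $\widetilde{\mathcal E_{r-1}(s_0)}$ reproduce the principal series action.

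For injectivity, however, you take a genuinely different route. The paper works with the Laplacian: from the relation
\[
\Delta_\ell A_{r,\ell,s_0}=\tfrac14(s_0+1-\ell)(s_0-1+\ell)\,A_{r,\ell,s_0}+\tfrac12 s_0\,A_{r-1,\ell,s_0}+\tfrac14\,A_{r-2,\ell,s_0},
\]
it specializes to the reducible point $s_0=k-1$ and iterates to get $\Delta_k^{\,r}A_{r,k,s_0}=2^{-r}s_0^{r}A_{0,k,s_0}\ne0$ (and similarly at weight $-k$ with $\Delta_{-k}+k$), thereby showing the images of $\phi_k$ and $\phi_{-k}$ are nonzero; since these generate the two minimal submodules $\DS^\pm(k-1)$ of $I(s_0)$, the kernel must be zero. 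Your argument instead reads off the $(\log v)^r v^{(s_0+1-\ell)/2}$ term in the constant Fourier coefficient, observes that the other constant-term exponent $v^{(1-\ell-s_0)/2}$ is distinct since $s_0\ne0$, and concludes $A_{r,\ell,s_0}\notin\operatorname{span}\{A_{t,\ell,s_0}:t<r\}$ for \emph{every} $\ell$ simultaneously. This is more elementary and uniform: it avoids any appeal to the submodule lattice of $I(s_0)$ and works verbatim whether $I(s_0)$ is reducible or not. The paper's approach, on the other hand, ties directly into the theme of Section~\ref{section.related}, since it exhibits the $A_{r,k,s_0}$ as functions annihilated by $\Delta_k^{r+1}$ but not by $\Delta_k^{r}$, which is precisely the phenomenon illustrated in Figure~2. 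One small remark: the hypothesis $\operatorname{Re}(s_0)>1$ is really used to guarantee holomorphy of $E_{\ell,s}$ at $s_0$ (so that $A_{r,\ell,s_0}=0$ for $r<0$); the non-cancellation of the $(\log v)^r$ term only needs $s_0\ne0$.
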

\begin{remark} Proposition \ref{surlinmap} yields that the space $\widetilde{\mathcal E_r(s_0)}$ has a $(\g,K)$-invariant filtration with $I(s_0)$ as subquotients.
The interesting case is if $s_0=k-1$ for an even integer $k>2$, so that the structure of $I(s_0)$ is given by (\ref{Is0-decomp}).
The quotients $\widetilde{\mathcal E_r(s_0)}/\widetilde{\mathcal E_{r-j}(s_0)}$ for $j>1$ reveal a more complicated extension of
$(\g,K)$-modules due to the relations (\ref{Laurent-lower}) and (\ref{Laurent-raise}).  The situation is illustrated in Figure 1 below.
\end{remark}
\begin{proof}[Proof of Proposition \ref{surlinmap}]
Relations (\ref{lower-Er}) and (\ref{raise-Er}) imply that
\begin{align}
L_\ell A_{r,\ell,s_0} &= \frac12(s_0+1-\ell)\,A_{r,\ell-2,s_0} + \frac12\,A_{r-1,\ell-2,s_0}\quad\text{and}\label{Laurent-lower}\\
R_\ell A_{r,\ell,s_0} &= \frac12(s_0+1+\ell)\,A_{r,\ell+2,s_0} + \frac12\,A_{r-1,\ell+2,s_0}.\label{Laurent-raise}
\end{align}
There are analogous relations for the action of $X_\pm$ on the $\widetilde{A_{r,\ell,s_0}}$'s. Moreover, we have
$$\Delta_\ell E_{\ell,s} = \frac14\,(s+1-\ell)(s+1+\ell-2)\,E_{\ell,s},$$
and hence
\begin{equation}\label{delta-rel}
\Delta_\ell A_{r,\ell,s_0}=\frac14(s_0+1-\ell)(s_0+1+\ell-2)\,A_{r,\ell,s_0}
+ \frac12 s_0\,A_{r-1,\ell,s_0} + \frac14\,A_{r-2,\ell,s_0}.
\end{equation}

Now suppose that $s_0 = k-1$ so that  $A_{0,k,s_0}\ne0$ is the standard weight $k$
holomorphic Eisenstein series.  For any $r\in\mathbb{N}_0$, relation (\ref{delta-rel})
then gives
$$
\Delta_k A_{r,k,s_0} = \frac12 s_0\,A_{r-1,k,s_0} + \frac14\,A_{r-2,k,s_0}.
$$
Hence
$$\Delta_k^r \,A_{r,k,s_0} = 2^{-r} s_0^r\,A_{0,k,s_0}\ne 0\quad{\rm and}\quad
\Delta_k^{r+1}A_{r,k,s_0}=0.$$
In particular, the functions $A_{r,k,s_0}$ of weight $k$ are linearly independent as $r$ varies and give examples of functions satisfying
(\ref{higher-power-harmonic}). Moreover, it follows that $A_{r,k}\notin \mathcal E_{r-1}(s_0)$ so that the map $\psi_r(s_0)$ is non-zero.
Also note that, by the raising relation above,
$$R_{k-2}A_{r,k-2,s_0} = (k-1)\,A_{r,k,s_0} + \frac12\,A_{r-1,k,s_0},$$
so that the image of $\phi_{k-2}$ under $\psi_r(s_0)$ is also non-zero. By equivariance, it follows that  $\psi_r(\phi_{2-k})\ne0$.
On the other hand, again by (\ref{delta-rel}),  we have
$$(\Delta_{-k}+k)A_{r,-k,s_0} = \frac12 s_0\,A_{r-1,-k,s_0} + \frac14\,A_{r-2,-k,s_0},$$
so that
\begin{align*}
(\Delta_{-k}+k)^rA_{r,-k,s_0} &= 2^{-r}\,s_0^r\,A_{0,-k,s_0}\quad\text{and} \\
(\Delta_{-k}+k)^{r+1}A_{r,-k,s_0} &= 0.
\end{align*}
But
$$A_{0,-k,s_0}(\tau) = E_{-k,s_0}(\tau) = v^k\sum_{\left(\begin{smallmatrix}
	a & b\\
	c & d
	\end{smallmatrix}\right)\in \Gamma_{\infty}\backslash \SL_2(\Z)}
(c\overline{\tau}+d)^{-k},$$
so we again conclude that the $A_{r,-k,s_0}$'s are linearly independent as $r\in\mathbb{N}_0$ varies. This implies that $\psi_r(\phi_{-k})\ne0$ and
thus that $\psi_r(s_0)$ is an isomorphism.
\end{proof}
\vfill\eject
The following picture summarizes the structure that arises:
\medskip

$$
\begin{matrix}\text{\small\bf Figure 2. The $(\g,K)$-module for the Taylor coefficients $A_{r,\ell,s_0}$}\\
\nass
\text{\bf of $E_{\ell,s}(\tau)$ at $s_0=k-1$}
\end{matrix}
$$
$$
\xymatrix{
\vdots&&&&&\vdots&&&&&&\vdots\\
\dots&{\circ}\ar@/^/[l] \ar@/^/[r]&{\scriptstyle\odot}\ar@/^/[l]\ar[rd]^{R_{-k}}&{\scriptstyle\ominus}\ar[l]_{L_{2-k}} \ar@/^/[r]&{\circ}\ar@/^/[l]
\ar@/^/[r]&\ \dots\ \ar@/^/[l]\ar@/^/[r]&
\ar@/^/[l]{\circ}\ar@/^/[r]&
{\scriptstyle\oplus}\ar[r]^{R_{k-2}}\ar@/^/[l]&{\bullet}\ar@/^/[r]\ar[ld]_{L_k}&{\circ}\ar@/^/[l]\ar@/^/[r]&\dots\ar@/^/[l]&\text{$\scriptstyle r=2$}\\
\dots&{\circ}\ar@/^/[l] \ar@/^/[r]&{\scriptstyle\odot}\ar@/^/[l]\ar[rd]^{R_{-k}}&{\scriptstyle\ominus}\ar[l]
\ar@/^/[r]&{\circ}\ar@/^/[l]
\ar@/^/[r]&\ \dots\ \ar@/^/[l]\ar@/^/[r]&
\ar@/^/[l]{\circ}\ar@/^/[r]&
{\scriptstyle\oplus}\ar@/^/[l]\ar[r]&{\bullet}\ar@/^/[r]\ar[ld]_{L_k}&{\circ}\ar@/^/[l]\ar@/^/[r]&\dots\ar@/^/[l]&\text{$\scriptstyle r=1$}\\
\dots&{\circ}\ar@/^/[l] \ar@/^/[r]&{\scriptstyle\odot}\ar@/^/[l]&{\scriptstyle\ominus}\ar[l] \ar@/^/[r]&{\circ}\ar@/^/[l]
\ar@/^/[r]&\ \dots\ \ar@/^/[l]\ar@/^/[r]&
\ar@/^/[l]{\circ}\ar@/^/[r]&
{\scriptstyle\oplus}\ar[r]\ar@/^/[l]&{\circledast}\ar@/^/[r]&{\circ}\ar@/^/[l]\ar@/^/[r]&\dots\ar@/^/[l]&\text{$\scriptstyle r=0$}
}
$$
\medskip

Here the element in the $\ell$th column and $r$th row is $A_{r,\ell,s_0}$. The
element $\circledast$ is the standard weight $k$ holomorphic Eisenstein series $A_{0,k,s_0}$.
The elements denoted with $\bullet$ (resp. $\odot$) have weight $k$ (resp. $-k$)
and are annihilated by powers of $\Delta_k$ (resp. $\Delta_{-k}+k$).
The arrows along rows with $r>0$ indicate maps defined up to a scalar factor and
an element of the same weight  lying in the row below the target, cf. (\ref{Laurent-lower}) and (\ref{Laurent-raise}). The diagonal arrows and the arrows in the
$r=0$ row are maps involving non-zero scaling factors.

Thus, for example, the $(\g,K)$-module generated by $A_{r,k,s_0}$, which is annihilated by $\Delta_k^6$ but no smaller power,
has the holomorphic discrete series $\DS^+(k-1)$ of weight $k$
as unique irreducible quotient. It has as constituents $\DS^+(k-1)$ with multiplicity $6$, $\DS^-(k-1)$ with multiplicity $5$ and $\FD(k-1)$
with multiplicity $5$.

\begin{remark}\label{rem10} It is clear that the structures we discuss here for harmonic and other Maass forms on $\SL_2(\R)$ have natural generalizations
to automorphic forms on other reductive groups, the basic point being that a weakening of the cuspidal or square integrable growth conditions
allow indecomposable but not irreducible Harish-Chandra modules to occur as archimedean components.
The nearly holomorphic modular forms introduced by Shimura, \cite{shimura.nh-1,shimura.nh-2},
and widely studied since are among the important examples.
Recently, in the case of $\text{Sp}_g(\R)$, for genus $g=2$,
Pitale, Saha, and Schmidt \cite{PSS.siegel} used the Harish-Chandra modules associated to such forms to prove a structure theorem for them.
Also for $\text{Sp}_2(\R)$,  Westerholt-Raum \cite{raum.siegel}
explained the use of Harish-Chandra modules to
provide examples of harmonic weak Siegel Maass forms and their holomorphic parts, Siegel mock modular forms.
We make no attempt to give a systematic review of these and related developments and apologize in advance for the many references omitted as a result.
\end{remark}

\end{document}